\DeclareMathOperator{\hol}{hol}
\DeclareMathOperator{\ad}{ad}
\DeclareMathOperator{\ev}{ev}
\DeclareMathOperator{\Diff}{Diff}
\DeclareMathOperator{\End}{End}
\theoremstyle{plain}
\newtheorem{theorem}{Theorem}[section]
\newtheorem{proposition}[theorem]{Proposition}
\theoremstyle{definition}
\newtheorem{definition}[theorem]{Definition}
\theoremstyle{remark}
\newtheorem{note}{Note}[section]
\newtheorem{remark}{Remark}[section]
\numberwithin{figure}{section}
\newcommand{\fg}{{\mathfrak g}}
\newcommand{\RR}{{\mathbb R}}
\renewcommand{\a}{\alpha}
\newcommand{\<}{\langle}
\renewcommand{\>}{\rangle}
\newcommand{\wt}{\widetilde}
\begin{document}

\title[String classes and equivariant cohomology]{Loop groups, string classes and equivariant cohomology}
\author{Raymond F. Vozzo}
\address{School of Mathematical Sciences\\
  University of Adelaide\\
  Adelaide, SA 5005 \\
  Australia}
  \email{raymond.vozzo@adelaide.edu.au}

\date{\today}

\begin{abstract}
We give a classifying theory for $LG$-bundles, where $LG$ is the loop group of a compact Lie group $G$, and present a calculation for the string class of the universal $LG$-bundle. We show that this class is in fact an equivariant cohomology class and give an equivariant differential form representing it. %is given by the equivariant extension of the basic 3-form on $G$.
We then use the caloron correspondence to define (higher) characteristic classes for $LG$-bundles and to prove for the free loop group an analogue of the result for characteristic classes for based loop groups in \cite{Murray-Vozzo1}. These classes have a natural interpretation in equivariant cohomology and we give equivariant differential form representatives for the universal case in all odd dimensions.
\end{abstract}

\thanks{The author acknowledges the support of the Australian Research Council and useful discussions with Michael Murray and Mathai Varghese.}

\subjclass[2010]{55R10, 55R35, 57R20, 81T30}

\maketitle

\tableofcontents

\section{Introduction}

Let $LG$ be the space of smooth maps from the circle into a compact Lie group $G$. Then $LG$ is a (Frech\'et) Lie group. In this article we shall be considering characteristic classes of principal bundles whose structure group is $LG$. Recall that in \cite{Murray-Vozzo1} we constructed characteristic classes for bundles whose structure group is the \emph{based} loop group, $\Omega G$, using the caloron correspondence---a correspondence between loop group bundles and certain $G$-bundles. Part of what made that construction simple was the fact that there exists a nice model for the universal bundle and hence a relatively easy classifying theory. Here we would like to extend these results to the group of free loops, where the classifying theory is more complicated.

This article is organised as follows: In section 2 we review the formula for the string class from \cite{Murray:2003} and the extension of this class to higher dimensions (for the based loop group) \cite{Murray-Vozzo1}. In section 3 we outline the construction of the universal $LG$-bundle and give a classifying map for any $LG$-bundle. We show that characteristic classes for $LG$-bundles are given by equivariant cohomology classes on $G$. Section 4 reviews the necessary background on equivariant cohomology, in particular the Cartan model of equivariant differential forms and the Mathai-Quillen isomorphism. Sections 5 and 6 contain our main results concerning string classes: In section 5 we calculate the universal (degree three) string class and in section 6 we extend this to any odd degree, proving a theorem analogous to the main result in \cite{Murray-Vozzo1} for $\Omega G$ (Theorem \ref{T:LG Chern-Weil}).

\section{String classes for loop group bundles}

\subsection{The string class}

String structures were introduced by Killingback as the string theory analogue of spin structures \cite{Killingback:1987}. Suppose we have an $LG$-bundle $P \to M$. Since $LG$ has a central extension by the circle (see, for example, \cite{Pressley-Segal} for details) we can consider the problem of lifting the structure group of $P$ to the central extension $\widehat{L G}$ of $L G$. Physically, this is related to the problem of defining a Dirac-Ramond operator in string theory. Mathematically, one has an obstruction to doing this---a certain degree three cohomology class on the base of the bundle. This class is called the \emph{string class} of the bundle and we write $s(P) \in H^3(M)$. In \cite{Murray:2003} Murray and Stevenson give a formula for a de Rham representative of this class which is given by:

\begin{theorem}[\cite{Murray:2003}]\label{T:string class}
Let $P \to M$ be a principal $L G$-bundle. Let $A$ be a connection on $P$ with curvature $F$ and let $\Phi$ be a Higgs field for $P$. Then the string class of $P$ is 
represented in de Rham cohomology by the form
$$
-\frac{1}{4\pi^2} \int_{S^1} \< \nabla \Phi, F \> \, d\theta,
$$
where $\<\,\, ,\, \>$ is an invariant inner product on $\fg$ normalised so the longest root has length squared equal to $2$ and $\nabla \Phi = d \Phi + [A, \Phi] - \dfrac{\partial A}{\partial \theta}$.
\end{theorem}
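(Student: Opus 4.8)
The plan is to realise the (image in de Rham cohomology of the) string class $s(P)$ as the Dixmier--Douady class of the \emph{lifting bundle gerbe} associated with the central extension $1 \to U(1) \to \widehat{LG} \to LG \to 1$, and then to produce an explicit connection and curving on this gerbe, built out of $A$ and $\Phi$, whose three-curvature is the stated form. Recall that the obstruction to lifting the structure group of $P$ from $LG$ to $\widehat{LG}$ is precisely such a bundle gerbe over $M$ — the lifting bundle gerbe assembled from the surjection $P \to M$ and the line bundle $\widehat{LG}\to LG$ pulled back to $P^{[2]}$ along the difference map $\delta\colon P^{[2]}\to LG$ — and that $s(P)\in H^3(M)$ is its Dixmier--Douady class. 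So it is enough to compute, from connective data, a de Rham representative of that class.

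First I would fix the infinitesimal picture: the Lie algebra of $\widehat{LG}$ is $L\fg \oplus i\RR$ with the Kac--Moody cocycle $\omega(X,Y) = \tfrac{1}{2\pi}\int_{S^1}\<X, \partial_\theta Y\>\, d\theta$, where the normalisation of $\<\,,\,\>$ in the statement is exactly what makes $\omega$ the generator of the relevant degree-two Lie algebra cohomology that integrates to the basic extension \cite{Pressley-Segal}. A connection $A$ on $P$ induces a connection on the line bundle over $P^{[2]}$, hence a connection on the gerbe, whose curvature on $P^{[2]}$ is controlled by $\omega$ applied to the $L\fg$-valued data coming from $A$. What is still missing to produce a curving is the "loop-direction" part of the data, and this is precisely what a Higgs field provides: $\Phi$ enters the curving $2$-form through terms such as $\<\Phi, A\>$, and after differentiating and using the structure equation $F = dA + \tfrac12[A,A]$ the covariant derivative $\nabla\Phi = d\Phi + [A,\Phi] - \partial A/\partial\theta$ appears, producing the three-curvature $-\tfrac{1}{4\pi^2}\int_{S^1}\<\nabla\Phi, F\>\, d\theta$.

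Two checks remain. First, closedness: $d\!\left(\int_{S^1}\<\nabla\Phi, F\>\, d\theta\right)=0$ follows from the Bianchi identity $\nabla F=0$, the analogous identity for $\nabla\Phi$, invariance of $\<\,,\,\>$ and integration by parts over $S^1$. Second, independence of choices: any two Higgs fields for $P$ differ by a section of the adjoint bundle and any two connections by a tensorial $L\fg$-valued $1$-form, and in each case the change of the integral form is exact via a Chern--Simons type transgression; together with naturality this also identifies its cohomology class with the lifting obstruction. A convenient alternative, better suited to fixing the constant, runs through the caloron correspondence \cite{Murray-Vozzo1}: $P$ corresponds to a $G$-bundle $Q\to M\times S^1$, a connection $A$ together with a Higgs field $\Phi$ assemble into a connection $\hat A = A + \Phi\, d\theta$ on $Q$ with curvature $\hat F = F + \nabla\Phi\wedge d\theta$ (up to a sign fixed by the orientation of $S^1$), so $\<\hat F,\hat F\> = \<F,F\> + 2\<F,\nabla\Phi\>\wedge d\theta$ and hence $\tfrac{1}{8\pi^2}\int_{S^1}\<\hat F,\hat F\> = \tfrac{1}{4\pi^2}\int_{S^1}\<\nabla\Phi, F\>\, d\theta$; since $\tfrac{1}{8\pi^2}\<\hat F,\hat F\>$ is the Chern--Weil representative of the generating degree-four class of $Q$, the form in the statement is (up to sign) its fibre integral over $S^1$, that is, the transgression to $M$ of that class, which is the string class.

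The main obstacle is the normalisation, and with it integrality: one must match the de Rham class produced by the curving precisely against the \emph{integral} lifting obstruction, and the answer depends on the exact factor of $2\pi$ in $\omega$ and on the "longest root squared $=2$" convention for $\<\,,\,\>$. The most economical way to nail the constant $-\tfrac{1}{4\pi^2}$ is to test the formula on a generating example — $G=SU(2)$, where $H^3(LSU(2))$ and the relevant class can be computed by hand, or the tautological loop construction — compare both sides there, and then invoke naturality of $s(\,\cdot\,)$ under bundle maps to deduce the general case.
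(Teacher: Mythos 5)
Note first that this paper does not prove Theorem \ref{T:string class} at all: it is imported verbatim from \cite{Murray:2003}, and the strategy you outline in your first two paragraphs (realise $s(P)$ as the Dixmier--Douady class of the lifting bundle gerbe for $\widehat{LG}\to LG$, then build a connection and curving from $A$ and $\Phi$ and compute the three-curvature) is exactly the strategy of that reference. However, as a proof your proposal has a genuine gap precisely at the point where the theorem lives: you never write down the curving. Saying that ``$\Phi$ enters the curving through terms such as $\<\Phi,A\>$'' and that ``after differentiating \ldots $\nabla\Phi$ appears'' asserts the conclusion rather than deriving it. The whole content of the result is the explicit choice of curving (in \cite{Murray:2003} it is built from the Kac--Moody cocycle applied to $A$, $\partial A$ and $\Phi$), the verification that the resulting two-form on $P$ differs from a pullback by exactly the coboundary coming from the gerbe connection, and the computation showing the descended three-form is $-\tfrac{1}{4\pi^2}\int_{S^1}\<\nabla\Phi,F\>\,d\theta$ with that normalisation tied to the cocycle $\tfrac{1}{2\pi}\int\<X,\partial Y\>d\theta$ for the basic central extension. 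None of this is carried out, and the closedness and independence-of-choices checks you do sketch are not substitutes for it (they only show the formula defines \emph{some} natural class, not the lifting obstruction).

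Your ``alternative'' route via the caloron correspondence has a circularity problem: the computation $\tfrac{1}{8\pi^2}\int_{S^1}\<\tilde F,\tilde F\> = \tfrac{1}{4\pi^2}\int_{S^1}\<\nabla\Phi,F\>\,d\theta$ is correct (and is exactly how the string form is related to Chern--Weil theory in \cite{Murray-Vozzo1} and in section \ref{SSS:higher Omega G classes} above), but it only proves the theorem if you already know that the string class of $P$ equals the fibre integral over $S^1$ of the degree-four characteristic class of the caloron transform. That identification is essentially the theorem of \cite{Murray:2003} itself (Killingback's statement made rigorous), established there as a consequence of the gerbe computation, so it cannot be taken as an independent input. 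Similarly, fixing the constant by ``testing on $G=SU(2)$'' needs a concrete $LSU(2)$-bundle over a base with known nontorsion $H^3$ and a verified lifting obstruction there; as stated (``$H^3(LSU(2))$'') it confuses the group with the base and does not yet constitute a calibration argument.
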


The \emph{Higgs field} for $P$ in Theorem \ref{T:string class} is a map $\Phi \colon P \to L\fg$ satisfying $\Phi(p\gamma) = \ad(\gamma^{-1}) \Phi(p) + \gamma^{-1} \partial \gamma$ (where $\partial \gamma$ is the derivative in the loop direction, $\partial \gamma / \partial \theta$). A geometric interpretation of this map is given by the \emph{caloron correspondence} \cite{Murray:2003} (See also \cite{Murray-Vozzo1, Murray-Vozzo2}), which allows us to extend the definition of the string class to higher degrees. We shall discuss this correspondence in section \ref{SS:higher Omega G classes}.

\subsection{The universal string class for $\Omega G$-bundles}\label{SS:s(PG)}

Theorem \ref{T:string class} carries over word-for-word for the based loop group $\Omega G$. In this case there is a simple model for the universal bundle which we shall now present, and which makes it possible to easily calculate the universal string class.

Let $PG$ be the space of paths in $G$, that is smooth maps  $p \colon \RR \to G$ such that $p(0) =0$ and $p^{-1} \partial p$ is $2\pi$-periodic. Then this is acted on by $\Omega G$ and
$$
\xymatrix@C=4ex{\Omega G\ar[r]	& PG\ar[d]\\
					&G}
$$
is an $\Omega G$-bundle called the \emph{path fibration}, where the projection $\pi$ sends a path $p$ to its value at $2\pi$. $PG$ is contractible and so the path fibration is a model for the universal $\Omega G$-bundle and we have $B\Omega G = G$. Given another $\Omega G$-bundle $P \to M$ we can write down a classifying map as follows \cite{Murray-Vozzo1}: Choose a Higgs field $\Phi$ for $P$. Then for $p \in P$ we define $\hol_\Phi (p)$ to be the solution to the equation $\Phi(p) = q^{-1}\partial q$ for $q \in PG.$ (Note that $q\in PG$ implies that $q(0)=1$ which ensures that the solution is unique.) The map $\hol_\Phi$ descends to a map (also called $\hol_\Phi$) $M \to G$. We call this map the \emph{Higgs field holonomy} for $P$ and it gives a classifying map for the $\Omega G$-bundle $P$.

A connection for the path fibration is given in \cite{Carey:2002}: Let $\a\colon \RR \to \RR$ be a smooth function such that $\a(t) = 0\ \forall t\leq 0 $ and $\a(t) = 1\ \forall t \geq 2\pi$. Then
$$
A = \Theta - \a \, \ad(p^{-1})  \pi^* \widehat{\Theta}
$$
defines a connection, where $\Theta$ is the (left invariant) Maurer-Cartan form on $G$ and $\widehat{\Theta}$ is the \emph{right} invariant Maurer-Cartan form. The curvature of this connection is
$$
F =  \frac{1}{2}\left(\a^2 -\a \right) \ad(p^{-1}) [ \pi^* \widehat{\Theta}, \pi^* \widehat{\Theta}].
$$
A Higgs field for $PG$ is given by
$$
\Phi(p) = p^{-1} \partial p.
$$
Its covariant derivative is
$$
\nabla\Phi = \partial \a \ad(p^{-1}) \pi^*\hat{\Theta}.
$$
We call these the  {\em standard} connection and Higgs field for the path fibration.

Using Theorem \ref{T:string class} and the standard connection and Higgs field for the path fibration we can calculate the universal string class for $\Omega G$-bundles $s(PG) \in H^3(G)$:
\begin{align*}
s(PG) &= -\frac{1}{4\pi^2} \int_{S^1} \left\< \frac{1}{2}\left(\a^2 -\a \right) \ad(p^{-1}) [ \pi^* \widehat{\Theta}, \pi^* \widehat{\Theta}], \partial \a \ad(p^{-1}) \pi^*\hat{\Theta} \right\> d\theta\\
	&= \frac{1}{48\pi^2} \left\< [\Theta, \Theta], \Theta\right \>
\end{align*}
which is the generator of the degree three cohomology of $G$.

%\begin{note}
%If we modify this to the group of smooth maps from $S^1$ to $G$ then we can use the caloron correspondence and we can define the higher string classes and show how they relate to the transgression as in \cite{Murray-Vozzo1}. This is how I guess the whole thing should be written---using these groups rather than the pointy loops.
%\end{note}

\subsection{Higher string classes for $\Omega G$-bundles}\label{SS:higher Omega G classes}

The material above is actually the degree three case of a more general construction presented in \cite{Murray-Vozzo1}. In that paper we showed how to obtain higher degree analogues of the string class for any $\Omega G$-bundle using the \emph{caloron correspondence}, a correspondence between loop group bundles and certain $G$-bundles. We shall outline these results here before extending them to $LG$-bundles in section \ref{S:higher LG classes}. We begin with the caloron correspondence.

\subsubsection{The caloron correspondence}

The caloron correspondence gives a bijection between isomorphism classes of $LG$-bundles over a manifold $M$ and isomorphism classes of $G$-bundles over $M\times S^1.$ It was first introduced in \cite{Garland:1988} as a bijection between isomorphism classes of $G$-instantons on $\RR^3 \times S^1$ and $\Omega G$-monopoles on $\RR^3.$ The form we will use in this article, however is from \cite{Murray:2003} where it was used to relate the string class of an $LG$-bundle to the Pontrjagyn class of a $G$-bundle.

Let $\widetilde{P} \to M \times S^1$ be a $G$-bundle and consider the $LG$-bundle $L\wt P \to L(M \times S^1)$. Now pull this bundle back by the map $\eta\colon M \to L(M\times S^1)$ given by $\eta(m) = (\theta \mapsto (m, \theta)).$ This gives an $LG$-bundle $P \to M$. Furthermore, given a connection $\tilde{A}$ on $\wt P,$ we can define a connection $A$ on $P$ by first defining a connection $L\tilde{A}$ on $L\wt P$ given by acting pointwise (i.e. $L\tilde{A}_\gamma(X)(\theta) = \tilde{A}_{\gamma(\theta)}(X(\theta))$, for $X$ any tangent vector to $\gamma \in L\wt P$) then pulling back by $\eta,$ so $A = \eta^* L\tilde A$.

On the other hand, given an $LG$-bundle $P \to M$, we can define a $G$-bundle $\wt P$ by $\wt P = (P \times G \times S^1)/LG$ where the $LG$ action is given by $(p, g, \theta)\gamma = (p\gamma, \gamma(\theta)^{-1}g, \theta)$ and the $G$ action is multiplication in the second factor. In order to define a connection $\tilde A$ on $\wt P$ we need to choose a Higgs field $\Phi$ for $P$. Then, as a form on $P \times G \times S^1$ which descends to the quotient, we have
$$
\tilde{A} = \ad(g^{-1}) A (\theta) + \Theta + \ad(g^{-1})\Phi \, d\theta.
$$

Using these constructions we can obtain the result from \cite{Murray:2003}:

\begin{proposition}[Caloron correspondence \cite{Murray:2003}]\label{P:caloron correspondence}
The constructions above are inverses of one another and give a bijection between isomorphism classes of $G$-bundles with connection over $M \times S^1$ and isomorphism classes of $LG$-bundles with connection and Higgs field over $M$.
\end{proposition}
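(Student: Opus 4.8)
The plan is to verify directly that the two constructions in the statement are mutually inverse, treating the underlying bundles and the connection data separately. First I would fix notation: write $\Psi$ for the passage from a $G$-bundle-with-connection $(\wt P, \tilde A)$ over $M\times S^1$ to the $LG$-bundle-with-connection-and-Higgs-field $(P,A,\Phi)$ over $M$ obtained by pulling $L\wt P$ back along $\eta$, and write $\Xi$ for the reverse passage $(P,A,\Phi)\mapsto(\wt P,\tilde A)$ using the associated bundle $\wt P=(P\times G\times S^1)/LG$. One should first observe that $\Psi$ as described in the excerpt is not quite complete: to get a Higgs field on $P$ one takes the tautological loop-direction data on $L\wt P$ (differentiating along $\theta$), so I would spell out that $\Phi$ arising from $(\wt P,\tilde A)$ is induced by $\tilde A$ evaluated on $\partial/\partial\theta$. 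With both maps pinned down, the claim splits into showing $\Xi\circ\Psi\cong\mathrm{id}$ and $\Psi\circ\Xi\cong\mathrm{id}$, at the level of isomorphism classes, compatibly with connection and Higgs field.

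For $\Psi\circ\Xi\cong\mathrm{id}$: starting from $(P,A,\Phi)$, form $\wt P=(P\times G\times S^1)/LG$ with the given $\tilde A=\ad(g^{-1})A(\theta)+\Theta+\ad(g^{-1})\Phi\,d\theta$, then build $L\wt P$ and pull back by $\eta$. I would exhibit an explicit $LG$-bundle map $P\to\eta^*L\wt P$ by sending $p\in P$ to the loop $\theta\mapsto[p,1,\theta]$, check it is $LG$-equivariant (here the action $(p,g,\theta)\gamma=(p\gamma,\gamma(\theta)^{-1}g,\theta)$ is exactly what makes $\theta\mapsto[p\gamma,1,\theta]$ equal the image of $p\gamma$), and then verify that this map pulls $\eta^*L\tilde A$ back to $A$ and intertwines the induced Higgs field with $\Phi$. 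The connection computation reduces to evaluating $\tilde A$ along $\theta\mapsto[p,1,\theta]$ in the base ($S^1$) direction, which reproduces the $\ad(g^{-1})\Phi\,d\theta$ term, hence the Higgs field, while evaluating along lifts of tangent vectors to $M$ and setting $g=1$ reproduces $A$.

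For $\Xi\circ\Psi\cong\mathrm{id}$: starting from $(\wt P,\tilde A)$, set $P=\eta^*L\wt P$ with connection $A=\eta^*L\tilde A$ and Higgs field induced as above, and then form $Q:=(P\times G\times S^1)/LG$. I would define a $G$-bundle map $Q\to\wt P$ by $[\,q,g,\theta\,]\mapsto q(\theta)\cdot g$ for $q\in P=\eta^*L\wt P$ (so $q$ is a loop in $\wt P$ sitting over a constant loop at some $m$, and $q(\theta)$ makes sense in the fibre over $(m,\theta)$), check this is well-defined on the quotient using the $LG$-action, is $G$-equivariant, and covers the identity of $M\times S^1$. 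Then one checks that the connection $\ad(g^{-1})A(\theta)+\Theta+\ad(g^{-1})\Phi\,d\theta$ on $Q$ is carried to $\tilde A$; here $A(\theta)=L\tilde A$ evaluated pointwise is literally $\tilde A$ along the loop, the $\Theta$ term matches the vertical $G$-direction, and the $\Phi\,d\theta$ term matches the way $\tilde A$ differs from its pointwise-$M$ part in the $S^1$-direction — this is precisely the defining property of the induced Higgs field, so the match is forced.

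The main obstacle is bookkeeping rather than conceptual difficulty: one must be scrupulous about how the Higgs field is produced from a connection on $L\wt P$ (it is not merely auxiliary data but is determined by $\tilde A$ along $\partial/\partial\theta$), and about the identification of tangent vectors to $L\wt P$ with $\theta$-families of tangent vectors to $\wt P$, so that "acting pointwise" and "pulling back by $\eta$" compose correctly. The cleanest way to avoid sign and factor errors is to do both verifications at the level of the explicit bundle maps above and then simply transport $\tilde A$ and $\Phi$ along them, rather than comparing connection $1$-forms coordinate-by-coordinate. Once the bundle isomorphisms are in hand, compatibility with connections and Higgs fields follows by naturality of the pullback $\eta^*$, functoriality of $L(-)$, and the fact that the formula for $\tilde A$ is manifestly built from exactly the data $(A(\theta),\Theta,\Phi\,d\theta)$ that the inverse construction returns.
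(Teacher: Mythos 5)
Your proposal is correct and follows essentially the standard argument: the paper itself gives no proof, deferring to the cited reference of Murray and Stevenson, where the correspondence is established exactly as you describe, by the explicit mutually inverse bundle maps $p\mapsto(\theta\mapsto[p,1,\theta])$ and $[q,g,\theta]\mapsto q(\theta)g$ together with transport of $\tilde A$ along them. Your observation that the passage from $(\wt P,\tilde A)$ to $(P,A,\Phi)$ must also produce the Higgs field, namely $\Phi(q)(\theta)=\tilde A\bigl(\partial_\theta q(\theta)\bigr)$, supplies the one detail the paper's summary leaves implicit and is precisely what makes the two constructions inverse at the level of bundles with connection and Higgs field.
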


Here we will need a slight modification of this result, given in \cite{Murray-Vozzo1}. Specifically, in order to construct an $\Omega G$-bundle $P\to M$ given a $G$-bundle $\wt P \to M\times S^1$ we need a way of choosing a basepoint in each fibre of $P$. This is given by choosing a section of $\wt P$ over $M\times \{0\}.$ Such a section is called a \emph{framing} and $\wt P$ is called a \emph{framed} $G$-bundle. A connection $A$ on $\wt P$ is called framed if it is flat with respect to the framing. We have

\begin{proposition}[\cite{Murray-Vozzo1}]
There is a bijection between isomorphism classes of framed $G$-bundles with framed connection over $M\times S^1$ and isomorphism classes of $\Omega G$-bundles with connection and Higgs field over $M$.
\end{proposition}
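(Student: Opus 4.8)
The plan is to deduce this result from the caloron correspondence of Proposition \ref{P:caloron correspondence} by carefully tracking what a framing adds and removes. Recall that the unframed correspondence sets up a bijection between $G$-bundles with connection over $M\times S^1$ and $LG$-bundles with connection and Higgs field over $M$. A framed $G$-bundle is precisely such a bundle together with a section $\sigma$ of $\wt P|_{M\times\{0\}}$, and we want to see that the extra datum of $\sigma$ corresponds exactly to a reduction of the structure group of the associated $LG$-bundle $P\to M$ from $LG$ to $\Omega G$, i.e. a choice of basepoint in each fibre compatible with the $\Omega G$-action. So the first step is to unwind the construction $\wt P=(P\times G\times S^1)/LG$ and observe that a section over $M\times\{0\}$ amounts to an $LG$-equivariant map $P\to G$, $p\mapsto g(p)$ with $g(p\gamma)=\gamma(0)^{-1}g(p)$; replacing $p$ by $p\cdot(g(p)\text{-corrected loop})$ then singles out a sub-$\Omega G$-bundle $P_0=\{p : g(p)=1\}\subset P$ on which the residual structure group is exactly $\{\gamma\in LG:\gamma(0)=1\}=\Omega G$. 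Conversely, an $\Omega G$-bundle $P_0\to M$ gives an $LG$-bundle $P=P_0\times_{\Omega G}LG$ together with the tautological framing.

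Next I would check that these two operations are mutually inverse and natural, so that they descend to a bijection on isomorphism classes; here one uses that two framings differing by a gauge transformation give isomorphic framed bundles, matching the notion of isomorphism of $\Omega G$-bundles. The final step is to handle the connection: one must verify that under $P\leftrightarrow P_0$ the Higgs field $\Phi$ and the connection $A$ on the $LG$-bundle restrict to a connection and Higgs field on $P_0$ (this is automatic since $P_0\subset P$), and conversely that the framed-ness condition on the connection $\tilde A$ on $\wt P$—flatness along $M\times\{0\}$ with respect to $\sigma$—translates into exactly the condition needed for $A=\eta^*L\tilde A$ to be a connection on the sub-bundle $P_0$ rather than on all of $P$. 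Using the explicit formula $\tilde A=\ad(g^{-1})A(\theta)+\Theta+\ad(g^{-1})\Phi\,d\theta$, evaluating at $\theta=0$ along the framing $g=1$ shows the framing-flatness is equivalent to $A(0)=0$, which is precisely the normalization that makes the Higgs field holonomy $\hol_\Phi$ land in $PG$ (paths with $q(0)=1$) rather than in $LG$.

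The main obstacle I expect is not conceptual but bookkeeping: one has to be scrupulous about which group acts on which factor in $\wt P=(P\times G\times S^1)/LG$ and how the evaluation-at-$0$ map interacts with the $LG$-action, since a sign or a left-versus-right convention error will make the residual group come out as the wrong copy of $\Omega G$ (based at $0$ versus based at $2\pi$, say, which matters given the conventions in \S\ref{SS:s(PG)} where paths satisfy $p(0)=1$ but $\pi$ evaluates at $2\pi$). A secondary subtlety is confirming that "framed connection" as defined (flat with respect to the framing) is exactly the right condition and not merely a sufficient one—i.e. that every connection-with-Higgs-field on $P_0$ arises this way—which again comes down to the explicit caloron connection formula above and the freedom in choosing the cutoff function $\a$. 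Once the conventions are pinned down, the proof is a direct restriction/induction argument layered on top of Proposition \ref{P:caloron correspondence}.
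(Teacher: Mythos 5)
Your skeleton is the intended one and matches the argument behind the cited result: a framing of $\wt P=(P\times G\times S^1)/LG$ over $M\times\{0\}$ is the same thing as a map $h\colon P\to G$ with $h(p\gamma)=\gamma(0)^{-1}h(p)$, the subset $P_0=\{h=1\}$ is an $\Omega G$-reduction of $P$ (the ``choice of basepoint in each fibre''), and the inverse construction is $P=P_0\times_{\Omega G}LG$ with its tautological framing; isomorphisms of framed bundles match isomorphisms of $\Omega G$-bundles. The genuine gap is in your handling of the connection, which is exactly the delicate point of the proposition. It is \emph{not} true that ``the connection $A$ restricts to a connection on $P_0$ automatically since $P_0\subset P$'': $A$ is $L\fg$-valued, and its restriction to $TP_0$ is a connection for the $\Omega G$-bundle $P_0$ only if it takes values in $\Omega\fg=\{\xi\in L\fg : \xi(0)=0\}$ there, which fails for a general connection on $P$. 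This is precisely what the framed-connection hypothesis provides: describing $P$ as $\eta^*L\wt P$, a point of $P_0$ is a loop $\hat p$ in $\wt P$ with $\hat p(0)=s(m)$, and since $A=\eta^*L\tilde A$ acts pointwise, for $X$ tangent to $P_0$ one has $A(X)(0)=\tilde A(X(0))=(s^*\tilde A)(\pi_*X)$; hence $A|_{TP_0}$ is $\Omega\fg$-valued if and only if $s^*\tilde A=0$. Your text first asserts the restriction is automatic and then, ``conversely'', says framedness is what is needed for it --- but these are the same direction of the correspondence, and only the second statement is correct.

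Relatedly, your closing claim that framing-flatness ``is equivalent to $A(0)=0$, which is precisely the normalization that makes $\hol_\Phi$ land in $PG$'' is off on both counts. The Higgs field holonomy lands in $PG$ by construction: the solution of $q^{-1}\partial q=\Phi(p)$ is normalised by the initial condition $q(0)=1$, independently of any condition on $A$. And framedness does not force $A(\cdot)(0)=0$ on all of $P$; it gives the vanishing of $A(\cdot)(0)$ only on $TP_0$ (equivalently $s^*\tilde A=0$), which is what allows the pair $(A,\Phi)$ to be read as a connection and Higgs field on $P_0$. You also need the (easy, but not stated) converse check: starting from an $\Omega G$-bundle with $\Omega\fg$-valued connection $A$ and Higgs field $\Phi$, the caloron connection $\ad(g^{-1})A(\theta)+\Theta+\ad(g^{-1})\Phi\,d\theta$ pulls back to zero along the tautological framing $m\mapsto[p_0(m),1,0]$ precisely because $A(\cdot)(0)=0$ and $g,\theta$ are constant along it, so the transform really is a framed connection. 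With these corrections your restriction/induction argument layered on Proposition \ref{P:caloron correspondence} does go through.
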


In both the free loop and based loop cases we call the $G$-bundle $\wt P$ the \emph{caloron transform} of $P$.

%In particular, isomorphism classes of $\Omega G$-bundles are in bijective correspondence to isomorphism classes of $G$-bundles over $M\times S^1$ which are \emph{framed} over $M\times \{0\}.$ This means there is a section of the $G$-bundle over $M\times \{0\}$.

\subsubsection{Higher string classes for $\Omega G$-bundles}\label{SSS:higher Omega G classes}

In \cite{Murray-Vozzo1} we used the caloron correspondence to define characteristic classes for $\Omega G$-bundles in all odd degrees by using the Chern-Weil map for the corresponding $G$-bundle and integrating over the circle:

\begin{definition}[\cite{Murray-Vozzo1}]\label{D:Omega G string class}
Let $P \to M$ be an $\Omega G$-bundle with connection $A$ and Higgs field $\Phi$ and $\wt P \to M \times S^1$ be the caloron transform of $P$ with connection $\tilde A$. Then  the \emph{string form} of $p \in I^k(\fg)$ is
$$
s_p(A, \Phi) = \int_{S^1} cw_p(\tilde A),
$$
where $I^k(\fg)$ is the set of all multilinear, symmetric, $\ad$-invariant maps $\fg \times \ldots \times \fg \to \RR$ and $cw \colon I^k(\fg) \to \Omega^{2k-1}(M \times S^1)$ is the Chern-Weil map for the $G$-bundle $\wt P$ (that is, $cw_p(\tilde A) = p(\tilde F, \ldots, \tilde F)$ for $\tilde F$ the curvature of $\tilde A$).
\end{definition}

A formula for the string form in terms of data on the $\Omega G$-bundle $P$ is given by
$$
s_p(A, \Phi) = \int_{S^1} p(\nabla \Phi, F,\ldots, F)\, d \theta,
$$
where $F$ is the curvature of $A$. This can be seen by calculating the curvature $\tilde F$ of the connection $\tilde{A} = \ad(g^{-1}) A (\theta) + \Theta + \ad(g^{-1})\Phi \, d\theta $ in terms of $F$ and $\Phi:$
$$
\tilde F = \ad(g^{-1})(F + \nabla \Phi \, d \theta)
$$
and substituting this into the formula for the string class in Definition \ref{D:Omega G string class}. From now on we shall write $p(\nabla \Phi, F^{k-1})$ for $p(\nabla \Phi, F,\ldots, F)$ and so on.

In \cite{Murray-Vozzo1} it is shown that $s_p(A, \Phi)$ is closed and independent of the choice of connection and Higgs field. We call the class of the form $s_p(A, \Phi)$ the \emph{string class} of $P$ (associated to $p$) and write $s_p(P) \in H^{2k-1}(M)$. The main result of \cite{Murray-Vozzo1} is then

\begin{theorem}[\cite{Murray-Vozzo1}]\label{T:"Chern-Weil"}
If $P \to M$ is an $\Omega G$-bundle with caloron transform $\widetilde{P} \to M\times S^1$ and 
$$
s(P) \colon I^k(\fg) \to H^{2k-1}(M)
$$
is the map which associates to any invariant polynomial $p$ the string class of $P$, then the following diagram commutes
%\begin{equation*} 
%\xy 
%(-55, 7.5)*+{I^k(\fg)} = "1";
%(-15,7.5)*+{H^{2k}(M\times S^1)}="2"; 
%(-55,-7.5)*+{H^{2k-1}(G)}="3"; 
%(-15,-7.5)*+{H^{2k-1}(M)}="4"; 
%{\ar^{\tau} "1";"3"};
%{\ar^{\int_{S^1}} "2";"4"};
%{\ar^{\hol_\Phi^*} "3";"4"};
%{\ar^{cw({\widetilde P})} "1";"2"};
%{\ar^{s(P)} "1";"4"}
%\endxy
%\end{equation*}
$$
\xymatrix@R=9ex@C=15ex{ I^k(\fg) \ar^{s(P)}[dr]  \ar^{cw(\wt P)}[r]  \ar^{\tau}[d]	& H^{2k}(M \times S^1) \ar^{\int_{S^1}}[d]\\
	H^{2k-1}(G) \ar^{\hol_\Phi^*}[r]		& H^{2k-1}(M)}
$$
Here $\tau$ is the transgression map given by (see section \ref{SS:equivariant transgressions})
$$
\tau(p) = \left( -\frac{1}{2} \right)^{k-1} \frac{k! (k - 1)!}{(2k-1)!} \, p(\Theta, [\Theta, \Theta], \ldots, [\Theta, \Theta]),
$$
$cw(\widetilde{P})$ is the usual Chern-Weil homomorphism for the $G$-bundle $\widetilde{P}$ and $\hol_\Phi$ is the classifying map of $P$.
\end{theorem}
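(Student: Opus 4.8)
The claimed diagram commutes once we check that both of its triangles do. The upper triangle, $s(P)=\int_{S^1}\circ\,cw(\wt P)$, is essentially a restatement of Definition~\ref{D:Omega G string class}: by definition $s_p(P)$ is the class of $\int_{S^1}cw_p(\tilde A)$, and since fibre integration is a chain map this is $\int_{S^1}$ applied to the Chern--Weil class $cw_p(\wt P)\in H^{2k}(M\times S^1)$. The alternative formula in terms of data on $P$ is obtained by inserting $\tilde F=\ad(g^{-1})(F+\nabla\Phi\,d\theta)$ into $cw_p(\tilde A)=p(\tilde F,\dots,\tilde F)$, expanding by multilinearity of $p$, and keeping only the part linear in $d\theta$ (the rest is killed by $(d\theta)^2$ or carries no $d\theta$ and dies under $\int_{S^1}$); this is what I will actually use in the computation. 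So the upper triangle needs only Section~\ref{SSS:higher Omega G classes} and the fact from \cite{Murray-Vozzo1} that $cw_p(\tilde A)$ is closed with class independent of the choices.

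The content is the lower triangle, $s(P)=\hol_\Phi^*\circ\tau$, and the plan is to prove it by naturality plus a universal computation, exactly as one shows that Chern--Weil forms are pulled back from the classifying bundle. Write $f=\hol_\Phi\colon M\to G$ and let $\hat f\colon P\to PG$ be the associated $\Omega G$-equivariant lift, sending $p\in P$ to the unique $q\in PG$ with $q^{-1}\partial q=\Phi(p)$; thus $\hat f^*(PG\to G)\cong(P\to M)$. The key point is that $\hat f$ pulls the standard Higgs field $\Phi_0(q)=q^{-1}\partial q$ of the path fibration back to $\Phi$ \emph{exactly}, since $(\Phi_0\circ\hat f)(p)=\hat f(p)^{-1}\partial\hat f(p)=\Phi(p)$. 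The string form is natural under bundle maps --- it is built from $\Phi$ and $F$ (and the covariant derivative, which transforms naturally) by an invariant polynomial and a fibre integration, and the caloron transform is functorial --- so for the standard connection $A_0$ on $PG$ we get $s_p(\hat f^*A_0,\hat f^*\Phi_0)=f^*s_p(A_0,\Phi_0)$ as forms on $M$. Combining this with connection independence, in $H^{2k-1}(M)$:
$$
s_p(P)=\bigl[\,s_p(\hat f^*A_0,\Phi)\,\bigr]=\bigl[\,s_p(\hat f^*A_0,\hat f^*\Phi_0)\,\bigr]=f^*\bigl[\,s_p(A_0,\Phi_0)\,\bigr]=\hol_\Phi^*\,s_p(PG),
$$
so everything reduces to identifying the universal string class $s_p(PG)\in H^{2k-1}(G)$, computed from the standard connection and Higgs field of the path fibration, with $\tau(p)$.

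This last identification is the only genuinely computational step, and it is where care is needed: it runs exactly as the degree-three case in Section~\ref{SS:s(PG)} (the instance $k=2$) but with $k-1$ curvature factors. Substituting
$$
F=\tfrac12(\a^2-\a)\,\ad(p^{-1})[\pi^*\widehat\Theta,\pi^*\widehat\Theta],\qquad \nabla\Phi=\partial\a\,\ad(p^{-1})\pi^*\widehat\Theta
$$
into $\int_{S^1}cw_p(\tilde A_0)$, one uses $\ad$-invariance of $p$ to cancel every $\ad(p^{-1})$ and to replace $\widehat\Theta$ by the left-invariant Maurer--Cartan form $\Theta=\ad(g^{-1})\widehat\Theta$, and pulls the functions of $\a$ out of $p$. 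Keeping the $d\theta$-linear part of $p\bigl((F+\nabla\Phi\,d\theta)^{k}\bigr)$ contributes the combinatorial factor $\binom{k}{1}=k$, while the $\theta$-integral collapses --- via the substitution $t=\a(\theta)$, which runs from $0$ to $1$ --- to the beta integral $\int_0^1\bigl(\tfrac12(t^2-t)\bigr)^{k-1}dt=\bigl(-\tfrac12\bigr)^{k-1}\dfrac{((k-1)!)^2}{(2k-1)!}$. Assembling these gives $s_p(PG)=\bigl(-\tfrac12\bigr)^{k-1}\dfrac{k!\,(k-1)!}{(2k-1)!}\,p(\Theta,[\Theta,\Theta]^{k-1})=\tau(p)$, which closes the argument. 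The main obstacle is twofold: making the naturality step airtight --- in particular that $s_p(A_0,\Phi_0)$ descends to an honest form on $G$, that $\hat f^*\Phi_0=\Phi$ is an equality of Higgs fields rather than merely of classes, and that one is entitled to replace the given connection by $\hat f^*A_0$ by connection independence --- and tracking the signs and the combinatorial and beta-function constants so that the universal computation lands precisely on the normalisation built into $\tau$.
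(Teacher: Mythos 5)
Your proposal is correct and takes essentially the same route as the paper: upper triangle by the definition of the string form, lower triangle by naturality and independence of choices reducing everything to the universal bundle, where the substitution $t=\a(\theta)$ turns the universal computation with the standard connection and Higgs field into the integral $\int_0^1\bigl(\tfrac12(t^2-t)\bigr)^{k-1}dt$ that produces the transgression constant $\bigl(-\tfrac12\bigr)^{k-1}\tfrac{k!(k-1)!}{(2k-1)!}$. (The paper quotes this theorem from \cite{Murray-Vozzo1}, but its proof of the $LG$ analogue, Theorem \ref{T:LG Chern-Weil} via Propositions \ref{P:natural} and \ref{P:s(ELG)}, together with the $k=2$ universal computation in section \ref{SS:s(PG)}, is exactly the strategy you describe.)
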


%\begin{note}
%I haven't outlined any of the constructions here. In particular, the caloron construction, the classifying theory of $\Omega G$-bundles and the proof that the string class is really a characteristic class. See \cite{Murray-Vozzo1} for details.
%\end{note}

\section{Classifying theory of $LG$-bundles}\label{S:classifying LG}

\subsection{The universal bundle}

In order to extend the ideas from the previous section we need a model for $EL G.$ This will allow us to calculate the universal string class. To construct this we view $L G$ as the semi-direct product $\Omega G\rtimes G.$ The group multiplication is given by
$$
(\gamma_1, g_1) (\gamma_2, g_2) = (g_2^{-1} \gamma_1 g_2 \gamma_2 , g_1g_2)
$$
and the isomorphism between $\Omega G \rtimes G$ and $L G$ is
\begin{align*}
	\Omega G \rtimes G &\xrightarrow{\sim} L G;\quad
	(\gamma, g)		\mapsto g\gamma.
\end{align*}
On the level of Lie algebras, the isomorphism is
\begin{align*}
	\Omega \fg \rtimes \fg &\xrightarrow{\sim} L \fg;\quad
	(\xi, X)		\mapsto X + \xi.
\end{align*}
We therefore need a model for the universal $\Omega G \rtimes G$-bundle. For this, we shall take the product of the universal $\Omega G$-bundle and the universal $G$-bundle. Recall that a model for the universal $\Omega G$-bundle is given by the space of paths $p\colon \RR \to G$ such that $p(0) = 1$ and $p^{-1} \partial p$ is periodic. So, for our model for $EL G$ we shall take the space $P G \times EG$ which is contractible since $P G$ and $EG$ are both contractible. This is acted on by $\Omega G\rtimes G:$
$$
(p, x) (\gamma, g) = (g^{-1} p g \gamma, xg)
$$
where $xg$ is the right action of $G$ on $EG.$ This action is free (since $G$ acts on $EG$ freely) and transitive on fibres (since the action on $EG$ is transitive and the equation $g^{-1}p_1 g\gamma = p_2$ can always be solved) and so $P G \times EG$ is a model for $EL G$ and $BL G$ is equal to $(P G\times EG)/(\Omega G \rtimes G).$ In fact, if we consider the map
$$
(P G\times EG)/(\Omega G \rtimes G) \to (G\times EG)/G; \quad [p, x] \mapsto [p(2\pi), x],
$$
where $[h, x] = [g^{-1}hg, xg],$ we can see this is well-defined, since
$$
[p, x] = [g^{-1}pg\gamma, xg] \mapsto [g^{-1}p(2\pi)g \gamma(2\pi), xg] = [p(2\pi), x].
$$
Furthermore, this is surjective, as the projection $P G \to G$ is surjective, and injective, for if we consider two elements $[p,x],\,[q,y] \in(P G\times EG)/(\Omega G \rtimes G)$ such that $[p(2\pi), x] = [q(2\pi), y]$ we have $y = xg$ and $q(2\pi) = g^{-1}p(2\pi)g.$ That is, the paths $q$ and $g^{-1}p g$ have the same endpoint. Therefore, the path $g^{-1}p^{-1}gq$ is actually a (based) loop. And since $q = g^{-1}pg (g^{-1}p^{-1}gq),$ we have
\begin{align*}
[q, y]	&= [ g^{-1}pg \gamma, xg]\\
	&= [p, x],
\end{align*}
where $\gamma=  g^{-1}p^{-1}gq \in \Omega G.$ Thus we have a diffeomorphism between $BL G$ and $(G\times EG)/G$ (or simply $G\times_G EG$). Importantly for our purposes this allows us to identify the cohomology of $BL G$ with the equivariant cohomology of $G$ (with its adjoint action). That is,
$$
H(BL G) = H_G (G).
$$
We will review equivariant cohomology in section \ref{S:equivariant cohomology} as we will use it in the sequel to calculate the string classes for an $LG$-bundle.

\subsection{Classifying maps}\label{SS:classifying maps}

Given an $L G$-bundle $P\to M$ we can write down the classifying map of this bundle as follows. Choose a Higgs field, $\Phi,$ for $P.$ Then define the map $f \colon P \to P G \times EG$ by 
$$
f(q) = (\hol_\Phi (q) , f_G (q)),
$$
where $\hol_\Phi$ is the Higgs field holonomy and $f_G$ is the classifying map for the $G$-bundle associated to $P$ by the projection $L G \to G$ given by mapping a loop to its start/endpoint (or equivalently, the projection $\Omega G\rtimes G \to G$). That is, $f(q) = (p, x)$ where $p^{-1} \partial p  = \Phi(q) $ and $ x $ is $f_G$ applied to the image of $q$ in $P\times_{L G}G.$ It is easy to see that this is equivariant with respect to the $L G$ action and hence descends to a map $M\to BL G$ since if $(\gamma, g) \in \Omega G \rtimes G$ then
\begin{align*}
f(q(g\gamma))	&= (\hol_\Phi (q(g\gamma)), f_G(q)g)
\end{align*}
and so $f$ is equivariant in the $EG$ slot (by virtue of the fact that $f_G$ is a classifying map) and also in the $P G$ slot since if $\hol_\Phi(q) = p$ then
\begin{align*}
\Phi(q(g\gamma)) 	&= \ad((g\gamma)^{-1})\Phi(q) + (g\gamma)^{-1}\partial (g\gamma)\\
				&= \ad((g\gamma)^{-1})\Phi(q) + \gamma^{-1}\partial \gamma
\end{align*}
and
\begin{align*}
(p(\gamma, g))^{-1} \partial (p(\gamma, g)) &= (g^{-1}p g\gamma)^{-1}\partial(g^{-1}p g\gamma)\\
			&= \gamma^{-1}g^{-1}p^{-1}g (g^{-1} \partial p g \gamma + g^{-1}p g \partial \gamma)\\
			&= \ad((g\gamma)^{-1}) p^{-1}\partial p + \gamma^{-1}\partial \gamma
\end{align*}
and so $\hol_\Phi (q(g\gamma)) = p (\gamma, g) = \hol_\Phi (q)(g\gamma).$

\begin{note}
Much of the construction above can be readily generalised to any group which is a semi-direct product. This is detailed in \cite[Appendix B]{Vozzo:PhD}.
\end{note}

\section{Equivariant cohomology}\label{S:equivariant cohomology}

The results from the previous section imply that any generalisation of Theorem \ref{T:"Chern-Weil"} to the free loop group will necessarily involve equivariant cohomology. In particular, in the next section we shall calculate the universal string class for $LG$-bundles (in analogy with the corresponding calculation for the universal $\Omega G$-bundle in section \ref{SS:s(PG)}). This will be most conveniently represented as an equivariant differential form---an element of the so-called Cartan model of equivariant cohomology. In this section, therefore, we wish to review this theory. We shall mainly follow \cite{Guillemin-Sternberg} (see also \cite{Atiyah:1984}).

\subsection{The Borel model and the Weil model}\label{SS:Borel}

Let $X$ be a manifold with an action of the Lie group $G$. If this action is free (for example, if $X$ is the total space of a principal $G$-bundle) then $X/G$ is a manifold and the \emph{equivariant cohomology} of $X$ is given by the cohomology of the quotient: $H_G(X) = H(X/G).$ If the action is not free however, then the quotient may not be a manifold and the cohomology of $X/G$ may not be the correct object to study. In general the equivariant cohomology of $X$ is defined by $H_G(X) := H(X \times_G E),$
where $E$ is some contractible space on which $G$ acts freely. A convenient example of such a space is of course the total space of the universal bundle, $EG$. So,
$$
H_G(X) = H(X \times_G EG).
$$
This is called the \emph{Borel model} for equivariant cohomology. The difficulty here is that in general it is not easy to study forms on $X \times_G EG$. In order to circumvent this we will introduce an algebraic version of the Borel model---the Weil model. Let us first set some conventions for the action of the Lie algebra $\fg$ on $\Omega(X)$.

Since $G$ acts on $X$, it acts on $\Omega(X)$ too. By convention, if $\phi\colon G \to \Diff(X)$, we define the action of $G$ on $\Omega (X)$ to be pull-back by the \emph{inverse} of $\phi.$ That is, $g \cdot \omega = (\phi_g^{-1})^* \omega$. This is because $\phi$ satisfies $\phi_{gh} = \phi_g \phi_h$ whereas we would like to have a \emph{right} action of $G$ on $X$ (and $\Omega(X)$). Note that this means that the fundamental vector field generated by the Lie algebra element $\chi$ is given by
$$
\bar\chi_x := \left.\frac{d}{dt}\right|_0 \exp(-t\chi) \cdot x,
$$
for $x \in X$. We shall write $L_\chi$ and $\iota_\chi$ for the Lie derivative and contraction with this vector field, respectively. Note that if $G$ acts freely on $X$ then a form $\omega \in \Omega(X)$ descends to the quotient $X/G$ precisely if it is invariant and horizontal with respect to the $G$ action. That is, if $L_\chi \omega$ and $\iota_\chi \omega$ both vanish for all $\chi \in \fg$. Alternatively, if $\{\xi_i \}$ is a basis for $\fg$ then $\omega$ descends if $L_{\xi_i} \omega = \iota_{\xi_i}\omega = 0, \ i=1, \ldots, n.$ From now on we shall write $L_i$ and $\iota_i$ for $L_{\xi_i}$ and $\iota_{\xi_i}$ respectively. %(As indicated in section \ref{S:classifying LG} the example we are interested in is the adjoint action of $G$ on itself)

In order to rephrase the Borel model of equivariant cohomology in algebraic terms we shall use Weil's version of Chern-Weil theory, which first appeared in \cite{Cartan:1951}.

Define the Weil algebra to be the tensor product of the exterior algebra of $\fg^*$, the dual of $\fg$, with the symmetric algebra of $\fg^*$. That is, the graded algebra
$$
W := \wedge(\fg^*) \otimes S(\fg^*).
$$
Here the symmetric algebra is understood to be \emph{evenly} graded, so every pure element is assigned twice its usual degree. It will be convenient to write down generators for $W$. Let $\{\theta^i\}$ be a basis for $\fg^*$. Then $W$ is generated by the elements $\theta^i$ of degree 1 in the exterior algebra and the corresponding elements, which we shall call $\mu^i$, of degree 2 in the symmetric algebra. The Weil algebra is in fact a \emph{differential} graded algebra with the differential $d$ given (on generators) by 
\begin{align*}
d\theta^i &= \mu^i - \tfrac12 c_{jk}^i \theta^j \theta^k,\\
d\mu^i &= c_{jk}^i \mu^j \theta^k.
\end{align*}
So we can consider the cohomology of $W$ with respect to this differential. Here we are of course interpreting the $\theta^i$'s as left invariant forms on $G$. Indeed, suppose that $a$ is a connection on $EG$ and $f$ is its curvature (for connections on universal bundles see for example \cite{Narasimhan:1961, Narasimhan:1963, Schlafly:1980} or \cite{Dupont:1978} for the simplicial point of view). Then if we expand $a$ in terms of a basis for $\fg$, so $a = a^i \xi_i$, the $a^i$'s are 1-forms on $EG$ and we can interpret them as elements in $\wedge (\fg^*)$ by their values on fundamental vector fields. That is, if $\chi \in \fg$ generates the vector field $\bar\chi$ on $EG$ then $a^i(\bar\chi) = \chi^i.$ On the other hand, given an element $p\in S^k(\fg^*)$, evaluating it on $f = f^i \xi_i$ gives a polynomial in the $f^i$'s and so they generate the $S(\fg^*)$ part of $W$. In fact, it is easy to see \cite{Guillemin-Sternberg} that this is the horizontal part of $W$ and so the basic subalgebra of $W$---the invariant part of this---is just the invariant, symmetric polynomials in $\fg^*$, whence we get the Chern-Weil homomorphism.\footnote{In order to see this, one proves that there is a morphism of differential graded algebras from $W$ to $\Omega (X)$ for any $G$-space $X$ which maps $W_{\text{basic}}$ into $\Omega(X)_{\text{basic}}$ and hence the basic cohomology of $W$ into the basic cohomology of $\Omega(X)$. If $X \to M$ is a principal $G$-bundle then $H_{\text{basic}}(\Omega(X)) = H(M)$ and so we get the Chern-Weil homomorphism. For details, see \cite{Guillemin-Sternberg} (or \cite{Cartan:1951} for the original exposition).} %Note that there is a subtlety here; namely that we want the $a^i$'s to be connection elements but because of our convention for the action of $G$ on $X$ we have $a^i (\bar\chi) = -\chi^i$. Therefore, given a connection $a$ on $EG$ with curvature $f$ we use the elements $\{-a^i\}$ and $\{-f^i\}$ as generators for $W.$

%NOTE: This is not a subtlety, it's wrong. The fundamental vector field on $X$ is ``backwards" because the action is the inverse of $\phi$ but the fundamental vector field on $EG$ generated by $\chi \in \fg$ is just $\left. \dfrac{d}{dt} \right|_0 e g \exp(t\chi)$ as per usual.

Recall that the de Rham complex $\Omega(X)$ forms a graded algebra itself. The standard result, then, is the following
\begin{theorem}[Equivariant de Rham Theorem]
The equivariant cohomology of $X$ is given by the basic cohomology (that is, the cohomology of the basic subcomplex) of $W \otimes \Omega(X):$
$$
H_G(X) = H_{\text{basic}}(W \otimes \Omega(X)).
$$
\end{theorem}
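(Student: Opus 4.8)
The plan is to interpolate between the Borel model $H_G(X)=H(X\times_G EG)$ and the algebraic model $H_{\mathrm{basic}}(W\otimes\Omega(X))$ by means of the Weil homomorphism of a connection on $EG$, and then to invoke homotopy invariance of basic cohomology for $\fg$-differential graded algebras equipped with a connection.

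First I would start from the definition $H_G(X)=H(X\times_G EG)$. Since $G$ acts freely on $X\times EG$, the ordinary de Rham theorem together with the description of descending forms recalled above (invariant and horizontal) gives $H(X\times_G EG)=H_{\mathrm{basic}}(\Omega(X\times EG))$. Pulling back along the two projections produces a map $\Omega(X)\otimes\Omega(EG)\to\Omega(X\times EG)$ which is a quasi-isomorphism (Künneth, for $EG$ a suitable smooth model) and is strictly compatible with the $\fg$-action, i.e. it commutes on the nose with every $\iota_\chi$ and $L_\chi$; so it descends to basic subcomplexes and it remains to compare $H_{\mathrm{basic}}(\Omega(X)\otimes\Omega(EG))$ with $H_{\mathrm{basic}}(W\otimes\Omega(X))$.

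For that I would choose a connection $a=a^i\xi_i$ on $EG$ with curvature $f=f^i\xi_i$; as explained in the footnote above this determines a morphism of differential graded algebras $w\colon W\to\Omega(EG)$ with $w(\theta^i)=a^i$, $w(\mu^i)=f^i$, and $w$ intertwines the $\iota_\chi$'s and $L_\chi$'s. Both $W$ and $\Omega(EG)$ are acyclic --- $\Omega(EG)$ because $EG$ is contractible, and $W$ by the standard computation, for which there is an explicit contracting homotopy (see \cite{Guillemin-Sternberg} or \cite{Cartan:1951}) --- so $w$ is a quasi-isomorphism, and hence so is $w\otimes\mathrm{id}_{\Omega(X)}\colon W\otimes\Omega(X)\to\Omega(EG)\otimes\Omega(X)$, again $\fg$-equivariantly.

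The substantive point, and the step I expect to be the main obstacle, is that a $\fg$-equivariant quasi-isomorphism need not descend to a quasi-isomorphism on basic subcomplexes; this requires a structural hypothesis on the algebras involved. Here both $W\otimes\Omega(X)$ and $\Omega(EG)\otimes\Omega(X)$ carry a \emph{connection element} in Cartan's sense --- the $\theta^i$'s coming from $W$, respectively the $a^i$'s coming from $\Omega(EG)$ --- i.e. they are $\fg$-dga's ``of type (C)'', and for such algebras basic cohomology is a homotopy invariant: any $\fg$-equivariant quasi-isomorphism between them induces an isomorphism on $H_{\mathrm{basic}}$. I would prove this by the argument of \cite{Guillemin-Sternberg}: use the connection to split off a copy of $W$ up to homotopy, reducing $H_{\mathrm{basic}}$ to the cohomology of a Koszul-type complex built from ordinary cohomology, or equivalently run the spectral sequence of the filtration by horizontal degree, on whose $E_1$-page the comparison map is determined by $H(w\otimes\mathrm{id})$ and is therefore an isomorphism. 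Chaining the isomorphisms then gives
\begin{align*}
H_{\mathrm{basic}}(W\otimes\Omega(X)) &\cong H_{\mathrm{basic}}(\Omega(EG)\otimes\Omega(X)) \cong H_{\mathrm{basic}}(\Omega(X\times EG))\\
&= H(X\times_G EG) = H_G(X),
\end{align*}
as claimed. The whole argument is of course exactly the content of Cartan's theorem, carried out in detail in \cite{Guillemin-Sternberg}; the only inputs are that $W$ is an acyclic type-(C) replacement of $\Omega(EG)$ and that basic cohomology is homotopy invariant for type-(C) algebras.
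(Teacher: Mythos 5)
Your argument is exactly the standard Cartan--Guillemin--Sternberg proof that the paper itself relies on (it states the theorem as a standard result and defers to \cite{Guillemin-Sternberg} and \cite{Cartan:1951}, with the Weil homomorphism $W\to\Omega(EG)$ already sketched in the footnote): Weil homomorphism from a connection on $EG$, acyclicity of $W$ and $\Omega(EG)$, and the key lemma that equivariant quasi-isomorphisms of $\fg$-algebras admitting a connection (W$^*$-modules) induce isomorphisms on basic cohomology, proved by the symmetric-degree spectral sequence. This is correct and essentially the same approach, with the genuinely substantive step correctly identified and attributed.
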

The basic cohomology of the differential graded algebra $W \otimes \Omega(X)$ is called the \emph{Weil model} of the equivariant cohomology of $X$.

The connection-curvature construction above helps translate between the Borel model and the Weil model. However, it is still quite difficult to perform computations with the Weil model. Therefore, we shall present another model in the next section---called the Cartan model---and describe the method for translating between this model and the Weil model: the Mathai-Quillen isomorphism.

\subsection{The Cartan model and the Mathai-Quillen isomorphism}\label{SS:Cartan}

In order to perform calculations with equivariant cohomology it is useful to find an automorphism of the Weil model which simplifies things significantly. Define the degree zero endomorphism $\gamma \in \End(W \otimes \Omega (X))$ by
$$
\gamma = \theta^i \otimes \iota_i.
$$
Clearly this is nilpotent and so the automorphism
$$
\phi := \exp \gamma
$$
is a finite sum. This automorphism is known as the \emph{Mathai-Quillen isomorphism}. The following fundamental observation (due to Mathai-Quillen \cite{Mathai:1986} and Kalkman \cite{Kalkman:PhD}) is proved in \cite{Guillemin-Sternberg}

\begin{theorem}\label{T:MQ}
The Mathai-Quillen isomorphism carries the horizontal subspace $(W \otimes \Omega (X))_{\text{hor}}$ into $W_\text{hor} \otimes \Omega (X)= S(\fg^*) \otimes \Omega(X)$ and hence the basic subcomplex $(W \otimes \Omega(X))_\text{basic}$ into the invariant elements of $S(\fg^*) \otimes \Omega (X)$, denoted $(S(\fg^*) \otimes \Omega (X))^G$. On this basic subcomplex the differential is transformed into $1\otimes d - \mu^i \otimes \iota_i.$
\end{theorem}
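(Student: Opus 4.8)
The plan is to reduce the whole statement to a handful of graded commutators and then exponentiate. I regard $W\otimes\Omega(X)$ as a differential graded algebra carrying the total differential $d = d_W\otimes 1 + 1\otimes d$, the total contractions $\iota_a = \iota_a^W\otimes 1 + 1\otimes\iota_a$, and the total Lie derivatives $L_a = L_a^W\otimes 1 + 1\otimes L_a$, where $\iota_a^W$ is the derivation of $W$ determined by $\iota_a^W\theta^b = \delta_a^b$, $\iota_a^W\mu^b = 0$, and $L_a^W = \iota_a^W d_W + d_W\iota_a^W$; by definition horizontal (resp.\ invariant, basic) means annihilated by all $\iota_a$ (resp.\ all $L_a$, both). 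Since $\gamma = \theta^i\otimes\iota_i$ is an even, nilpotent derivation, $\phi = \exp\gamma$ is an algebra automorphism with inverse $\exp(-\gamma)$, and for any operator $T$ the conjugate $\phi T\phi^{-1} = \sum_{n\ge 0}\tfrac{1}{n!}(\ad\gamma)^n(T)$ is a finite sum, where $\ad\gamma(T)$ is the graded commutator $[\gamma,T]$; so the whole proof comes down to computing $\ad\gamma$ on $\iota_a$, on $L_a$, and on $d$.

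For horizontality I would start from the anticommutator $\iota_a^W\epsilon(\theta^b) + \epsilon(\theta^b)\iota_a^W = \delta_a^b$ (with $\epsilon(\theta^b)$ the operator "wedge with $\theta^b$"), which gives $[\iota_a^W\otimes 1,\ \gamma] = 1\otimes\iota_a$, together with $[1\otimes\iota_a,\ \gamma] = 0$ because the contractions on $X$ anticommute. These two identities yield $\phi\,\iota_a\,\phi^{-1} = \iota_a^W\otimes 1$, i.e.\ $\phi$ intertwines the total contraction with the Weil-only contraction. Hence $w$ horizontal, meaning $\iota_a w = 0$ for all $a$, forces $(\iota_a^W\otimes 1)(\phi w) = 0$ for all $a$; but the common kernel of the operators $\iota_a^W\otimes 1$ is exactly the $\theta$-free subspace $S(\fg^*)\otimes\Omega(X)$, which is the first assertion. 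For invariance, the key observation is that $\sum_i\theta^i\otimes\iota_i$ does not depend on the chosen basis of $\fg^*$ and is therefore $G$-invariant, equivalently $[L_a,\gamma] = 0$ for all $a$; so $\phi$ commutes with every $L_a$ and preserves invariant elements. Combining the two shows $\phi$ carries the basic subcomplex into $(S(\fg^*)\otimes\Omega(X))^G$; and since $\phi^{-1}$ obeys the same intertwining identity, running the argument in reverse shows every basic element of the target is $\phi$ of a basic element of the source, so $\phi$ is in fact an isomorphism onto $(S(\fg^*)\otimes\Omega(X))^G$.

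For the transformed differential I would compute $[\gamma, d]$ from the Weil formulas $d_W\theta^i = \mu^i - \tfrac12 c^i_{jk}\theta^j\theta^k$, $d_W\mu^i = c^i_{jk}\mu^j\theta^k$ and Cartan's magic formula $L_i = d\iota_i + \iota_i d$ on $\Omega(X)$, obtaining $[\gamma,d] = -\mu^i\otimes\iota_i + \theta^i\otimes L_i + \tfrac12 c^i_{jk}\theta^j\theta^k\otimes\iota_i$. One then checks $[\gamma, \mu^i\otimes\iota_i] = 0$, $[\gamma, c^i_{jk}\theta^j\theta^k\otimes\iota_i] = 0$ and $[\gamma, \theta^i\otimes L_i] = -c^i_{jk}\theta^j\theta^k\otimes\iota_i$, so the expansion stops after the double commutator, the two $\theta$-quadratic terms cancel, and $\phi\,d\,\phi^{-1} = d_W\otimes 1 + 1\otimes d + \theta^i\otimes L_i - \mu^i\otimes\iota_i$ on all of $W\otimes\Omega(X)$. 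It remains to restrict this operator to $\phi$ of the basic subcomplex, which by the previous paragraph is $(S(\fg^*)\otimes\Omega(X))^G$. On that subspace $1\otimes d$ and $\mu^i\otimes\iota_i$ act as they stand, whereas $d_W\otimes 1$ and $\theta^i\otimes L_i$ each take values in the $\theta$-linear part; using the identity $d_W = \sum_i\epsilon(\theta^i) L_i^W$ valid on $S(\fg^*)$, together with the invariance relation $1\otimes L_i = -(L_i^W\otimes 1)$ holding on invariant elements, those two terms cancel and what survives is exactly $1\otimes d - \mu^i\otimes\iota_i$, the Cartan differential.

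I expect the main obstacle to be this last step: tracking the Koszul signs faithfully through the iterated graded commutators $[\gamma,[\gamma,d]]$, and then organising the argument so that the cancellation of the leftover $d_W\otimes 1$ and $\theta^i\otimes L_i$ terms on the invariant subspace is a clean consequence of the two identities just cited rather than a sign miracle. The horizontality and invariance halves are by contrast short, once the three relations $[\iota_a^W\otimes 1,\gamma] = 1\otimes\iota_a$, $[1\otimes\iota_a,\gamma] = 0$ and $[L_a,\gamma] = 0$ are recorded. It is also worth noting that all the operators in play lie in a finite-dimensional graded Lie algebra of endomorphisms defined uniformly in $X$, so these commutator computations are purely algebraic and, for the differential, may be carried out once inside $W$ alone (with a formal factor of $\Omega$), independent of the particular $G$-space $X$.
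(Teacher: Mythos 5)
Your proof is correct: the conjugation identities $\phi\,\iota_a\,\phi^{-1}=\iota_a^W\otimes 1$, $[L_a,\gamma]=0$ and $\phi\,d\,\phi^{-1}=d_W\otimes 1+1\otimes d+\theta^i\otimes L_i-\mu^i\otimes\iota_i$ all check out (including the signs and the cancellation of $d_W\otimes 1+\theta^i\otimes L_i$ on invariant $\theta$-free elements via $d_W=\theta^iL_i^W$ on $S(\fg^*)$). The paper gives no proof of its own, deferring to Guillemin--Sternberg and Kalkman, and your argument is essentially that standard one: expand $\exp(\ad\gamma)$ on the operators $\iota_a$, $L_a$, $d$ and restrict to the basic subcomplex.
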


We make the following

\begin{definition}\label{D:Cartan}
The cohomology of the complex $\Omega_G(X) := (S(\fg^*) \otimes \Omega (X))^G$ with the differential above, $1\otimes d - \mu^i \otimes \iota_i,$ is called the \emph{Cartan model} for the equivariant cohomology of $X$. The elements of the space $\Omega_G(X)$ are called \emph{equivariant differential forms}.
\end{definition}

Theorem \ref{T:MQ} implies that the equivariant cohomology according to the Cartan model agrees with that of the Weil model. The elements of $\Omega_G(X)$ have a pleasant interpretation as form-valued polynomials on $G$. The invariance condition translates with this interpretation into equivariance of the polynomials. The differential on an element $\omega$ is then given by
$$
d(\omega(\chi)) - \iota_{\chi}(\omega(\chi)).
$$

\begin{remark}\label{R:MQ}
Suppose that $\omega$ is a degree $n$ basic element of the Weil algebra. Then $\omega$ is a sum of terms, each lying in a different part of the graded algebra
$$
\bigoplus_{i + 2j + k =n} \wedge^i(\fg^*) \otimes S^j(\fg^*) \otimes \Omega^k(X).
$$
Further,
\begin{align*}
\gamma\colon& \wedge^i(\fg^*) \otimes S^j(\fg^*) \otimes \Omega^k(X) \to \wedge^{i+1}(\fg^*) \otimes S^j(\fg^*) \otimes \Omega^{k-1}(X),\\
\gamma^2\colon& \wedge^i(\fg^*) \otimes S^j(\fg^*) \otimes \Omega^k(X) \to \wedge^{i+2}(\fg^*) \otimes S^j(\fg^*) \otimes \Omega^{k-2}(X)
\end{align*}
and so on. But since $\omega$ is basic, we know that $\phi (\omega) \in (S(\fg^*) \otimes \Omega (X))^G.$ That is, it has no components which lie in $\wedge(\fg^*)$. Therefore, since $\phi = 1 + \gamma + \tfrac12 \gamma^2 + \ldots$ and every application of $\gamma$ raises the degree of the exterior algebra by one, the effect of $\phi$ is essentially to ``pick out'' the components of $\omega$ which have no component in $\wedge(\fg^*).$ The end result of $\phi$ acting on the rest of the terms is that they all must cancel.
\end{remark}

\section{The universal string class}\label{S:universal string class}

Now that we have a model for the universal $LG$-bundle we would like to prove the analogue of Theorem \ref{T:"Chern-Weil"} for $LG$-bundles. Note that this will naturally involve equivariant cohomology since rather than the universal string class being equal to the transgression map, which takes values in $H(G) = H(B \Omega G)$, the universal string class for $LG$-bundles will be in $H(G \times_G EG) = H_G(G)$. Thus the first thing we need to do is calculate this universal string class. To illustrate the idea we will do this in the degree three case first and extend the result to higher degrees in the next section.

\subsection{The string class in the Borel model}\label{SS:string Borel}

Firstly we will use Theorem \ref{T:string class} to calculate a differential form representing the string class of $ELG = PG \times EG$. Note that this will give us a class in $H(G \times_G EG),$ the Borel model for the equivariant cohomology of $G.$ In order to use Theorem \ref{T:string class}, the first thing we need is a connection on $P G \times EG.$ Now, $P G$ is already a smooth manifold. In order to define a smooth structure and find a connection on $EG$ we use the results in \cite{Narasimhan:1961, Narasimhan:1963, Schlafly:1980}. As long as the dimension of the base of the $G$-bundle $P \to M$ is less than or equal to $n$ this gives a construction of a smooth bundle $EG_n \to BG_n$ with connection which is a model for the universal $G$-bundle. From now on we assume therefore that the dimension of the base of our $L G$-bundle is fixed (and less than or equal to $n$ for some $n$).

 To define a connection we need to know what a vertical vector looks like. Consider the vector in $T_{(p, x)}(P G\times EG_n) = T_p P G \times T_x EG_n$ generated by the Lie algebra element $(\xi, X) \in \Omega \fg \rtimes \fg:$
\begin{align*}
\iota_{(p,x)} (\xi, X) 	&=  \frac{d}{dt}\bigg|_0 ( (1-tX) p (1+tX)(1+t\xi), x e^{tX})\\
				&=  \frac{d}{dt}\bigg|_0 ( t(-Xp + pX + p\xi), x e^{tX})\\
				&= ( p (X - \ad(p^{-1})X + \xi), \iota_x (X)).
\end{align*}

A connection is given \cite{Vozzo:PhD} by
$$
A = \Theta - \a \ad(p^{-1})\left\{ \ev_{2\pi}^*\hat{\Theta} - \ad(p(2\pi))a + a \right\}  + \ad(p^{-1})a.
$$
As before, $\Theta$ is the Maurer-Cartan form, $\hat{\Theta}$ is the right Maurer-Cartan form and $\a$ is a smooth function such that $\a(t) =0 \ \forall t\leq 0$ and $\a(t)=1 \ \forall t\geq 2\pi.$ The function $\ev_{2\pi} \colon P G \to G$ is the projection and $a$ is a connection on $EG$ (which we shall assume we have using the results cited earlier). It can be easily checked that this form returns the Lie algebra element $\xi + X \in L\fg$ when evaluated on the vertical vector above and also that it transforms in the adjoint representation. Thus it satisfies the conditions for a connection.

To calculate the string class we will need the curvature of this connection and a Higgs field. As usual, the curvature is given by the formula
$$
F = DA
$$
where $D$ is the covariant exterior derivative. So we have
$$
F ((V,W), (V',W')) = -\tfrac{1}{2} A ([h(V,W), h(V',W')])
$$
for $hX$ the projection onto the horizontal subspace of the vector $X$.
%Now,
%\begin{multline*}
%[h(V,W), h(V',W')] = ([hV, hV'], [hW,hW'])\\
%= \left(\left[\left(\frac{\theta}{2\pi}\right) p \left\{ad(p^{-1}) \left(V(2\pi)p(2\pi)^{-1} - ad(p(2\pi))a(W) + a(W)\right) \right\},\right.\right.\\
%\left(\left.\frac{\theta}{2\pi}\right) p \left\{ad(p^{-1}) \left(V'(2\pi)p(2\pi)^{-1} - ad(p(2\pi))a(W') + a(W')\right) \right\}\right],\\
%\left.\vphantom{\frac{\theta}{2\pi}}[hW , hW']\right)
%\end{multline*}
%and calculating just the first slot gives
%\begin{multline*}
%p\left(\frac{\theta}{2\pi}\right)^2 ad(p^{-1}) \left\{ [V(2\pi)p(2\pi)^{-1}, V'(2\pi)p(2\pi)^{-1}] \right.\\
%- [V(2\pi)p(2\pi)^{-1},ad(p(2\pi))a(W')] + [V(2\pi)p(2\pi)^{-1},a(W')]\\
%-[ad(p(2\pi))a(W),V'(2\pi)p(2\pi)^{-1}] + ad(p(2\pi))[a(W),a(W')]\\
% - [ad(p(2\pi))a(W), a(W')] + [a(W),V'(2\pi)p(2\pi)^{-1}]\\ 
% \left.- [a(W),ad(p(2\pi))a(W')]  + [a(W),a(W')]\right\}.
%\end{multline*}
This yields
\begin{multline*}
F =
 \left(\a^2 - \a\right) \ad(p^{-1}) \left\{
 \tfrac{1}{2}[\ev_{2\pi}^*\hat{\Theta}, \ev_{2\pi}^*\hat{\Theta}] 
 - [\ev_{2\pi}^*\hat{\Theta}, \ad(p(2\pi)^{-1})a] \right.\\
 \left.+ [\ev_{2\pi}^*\hat{\Theta}, a] 
+\tfrac12 \ad(p(2\pi)) [a,a]
 - [\ad(p(2\pi))a, a] 
 + \tfrac{1}{2} [a,a]\right\}  \\
 + \a \ad(p^{-1})(\ad(p(2\pi))f - f) 
 + \ad(p^{-1})f
\end{multline*}
where $f$ is the curvature of $a.$

The other piece of data we need to calculate the string class is a Higgs field for $EL G.$ Define the map $\Phi \colon P G \times EG_n \to L\fg$ by
$$
\Phi (p,x) = p^{-1} \partial p.
$$
Then by the calculation at the end of section \ref{SS:classifying maps} we see that $\Phi$ is a Higgs field for $P G \times EG_n.$ Next we need to calculate
$$
\nabla \Phi = d \Phi + [A, \Phi] - \partial A.
$$
We can show
$$
d \Phi = [\Phi, \Theta] + \partial \Theta
$$
and so
\begin{multline*}
\nabla \Phi = [\Phi, \Theta] + \partial \Theta\\ 
+ \left[ \Theta - \a \ad(p^{-1})\left\{ \ev_{2\pi}^*\hat{\Theta} - \ad(p(2\pi))a + a \right\}  + \ad(p^{-1})a, \Phi\right]\\
 - \partial \left( \Theta - \a \ad(p^{-1})\left\{ \ev_{2\pi}^*\hat{\Theta} - \ad(p(2\pi))a + a \right\}  + \ad(p^{-1})a \right)
\end{multline*}
\vspace{-3ex}
\begin{multline*}
\phantom{\nabla \Phi} = \partial\a \ad(p^{-1}) \left\{ \ev_{2\pi}^*\hat{\Theta} - \ad(p(2\pi))a + a\right\}.\\
\end{multline*}
%\vspace{-5ex}
Therefore, using the formula from Theorem \ref{T:string class}, the string class for $P G \times EG_n$ as a class in $H(G \times_G EG)$ is
\begin{multline*}
-\frac{1}{4\pi^2}\int_{S^1}\left\< \left( \a^2 -\a \right) \left(
 \tfrac{1}{2}[\ev_{2\pi}^*\hat{\Theta}, \ev_{2\pi}^*\hat{\Theta}] 
 - [\ev_{2\pi}^*\hat{\Theta}, \ad(p(2\pi)^{-1})a] \right.\right.\\
\left.+ [\ev_{2\pi}^*\hat{\Theta}, a] 
 +\tfrac12 \ad(p(2\pi)) [a,a]
 - [\ad(p(2\pi))a, a] 
+ \tfrac{1}{2} [a,a]\right)\\
+ \a \ad(p^{-1})(ad(p(2\pi))f - f) 
 + \ad(p^{-1})f,\\
  \left. \partial \a \left( \ev_{2\pi}^*\hat{\Theta}
 - \ad(p(2\pi))a 
 + a\right)\right\>
\end{multline*}
%\vspace{-3ex}
\begin{multline*}
= - \frac{1}{8\pi^2}
\left\<-\tfrac16 [\hat{\Theta}, \hat{\Theta}]
+\tfrac13 [\hat{\Theta}, \ad(p(2\pi))a]
-\tfrac13 [\hat{\Theta}, a]\right.\\
+ \tfrac16 \ad(p(2\pi)) [a,a]
+\tfrac13 [\ad (p(2\pi))a , a]
- \tfrac16 [a,a]
+ \ad p(2\pi) f + f,\\
\left.\hat{\Theta} - \ad p(2\pi) a + a \right\>
%= -\frac{1}{8\pi^2}\left\< -\tfrac{1}{3}\left(\tfrac{1}{2}[\hat{\Theta}, \hat{\Theta}] - [\hat{\Theta}, ad(p(2\pi)^{-1})a] \right.\right.\\
%+ \tfrac{3}{2} [a,a]
% \left.+ [\hat{\Theta}, a] - [ad(p(2\pi))a, a]  \right)\\
%+ ad(p(2\pi))f + f, \left.\left(\hat{\Theta} - ad(p(2\pi))a + a\right)\right\>.
\end{multline*}
\begin{multline*}
= \frac{1}{8\pi^2} \left\{
\tfrac16\big\< [\hat{\Theta}, \hat{\Theta}], \hat{\Theta}\big\>
-\tfrac12 \big\< [\hat{\Theta}, \hat{\Theta}], \ad( g) a\big\>
+ \tfrac12 \big\< [\hat{\Theta}, \hat{\Theta}],  a\big\> \right.\\
+ \tfrac12 \big\< \hat{\Theta}, \ad(g)[a,a]\big\>
- \big\< \hat{\Theta}, [\ad(g)a,a]\big\>
+ \tfrac12 \big\< \hat{\Theta}, [a,a]\big\>\\
+ \tfrac12 \big\< a, \ad(g)[a,a]\big\>
- \tfrac12 \big\< \ad(g)a, [a,a]\big\>
- \big\< \ad(g) f + f, \hat{\Theta}\big\>\\
\left.\vphantom{\hat{\Theta}}- \big\< \ad(g) f - \ad(g^{-1})f, a\big\> \right\}
\end{multline*}
for $g = p(2\pi).$

\subsection{The string class in the Cartan model}\label{SS:string Cartan}

The formula above for the string class is quite unwieldy. We have already seen that the most compact representation for equivariant cohomology classes is via equivariant differential forms. We shall now proceed to find an equivariant differential form representing the same class as above. Firstly, let us write this form as an element of the Weil model. The degree three component of the Weil model is given by
\begin{multline*}
(W \otimes \Omega (G))^3 = \bigoplus_{i+2j+k = 3} \wedge^i(\fg^*) \otimes S^j(\fg^*) \otimes \Omega^k(G)\\
= (\wedge^0 \otimes S^0 \otimes \Omega^3) \oplus 
(\wedge^1 \otimes S^0 \otimes \Omega^2) \oplus 
(\wedge^2 \otimes S^0 \otimes \Omega^1) \oplus 
(\wedge^3 \otimes S^0 \otimes \Omega^0)\\ \oplus
(\wedge^0 \otimes S^1 \otimes \Omega^1) \oplus 
(\wedge^1 \otimes S^1 \otimes \Omega^0).
\end{multline*}
We can use the connection-curvature construction from section \ref{SS:Borel} to rewrite the string class as follows. Consider, for example, the terms $-\tfrac12 \big\< [\hat{\Theta}, \hat{\Theta}], \ad( g) a\big\>$ and $ \tfrac12 \big\< [\hat{\Theta}, \hat{\Theta}],  a\big\>$ above. In the Weil model, these terms should live in $\wedge^1 \otimes S^0 \otimes \Omega^2$. If we expand the connection in terms of a basis $\{\xi_i\}$ for $\fg$ then we can write
\begin{align*}
\tfrac12 \big\< [\hat{\Theta}, \hat{\Theta}],  a\big\> -\tfrac12 \big\< [\hat{\Theta}, \hat{\Theta}], \ad( g) a\big\> &=  \tfrac12 \big\< [\hat{\Theta}, \hat{\Theta}],  a^i\xi_i\big\> -\tfrac12 \big\< [\hat{\Theta}, \hat{\Theta}], \ad( g) a^i\xi_i\big\>\\
&= \tfrac12 a^i \big\< [\hat{\Theta}, \hat{\Theta}], \xi_i -\ad( g) \xi_i\big\>.
\end{align*}
Therefore the element in $\wedge^1 \otimes S^0 \otimes \Omega^2$ corresponding to these two terms is
$$
\tfrac12 a^i \big\< [\hat{\Theta}, \hat{\Theta}], \xi_i -\ad( g) \xi_i\big\>.
$$
Similarly, for the terms with the connection appearing twice (the middle three terms), we have
$$
\tfrac12 a^i \wedge a^j \big\< \hat\Theta , c_{ij}^k (\xi_k + \ad(g) \xi_k) \big\> - a^i \wedge a^j \big\< \hat\Theta , [\ad(g) \xi_i, \xi_j] \big\> \in \wedge^2 \otimes S^0 \otimes \Omega^1,
$$
where $c_{ij}^k$ are the structure constants for $\fg$. The last two terms live in $\wedge^0 \otimes S^1 \otimes \Omega^1$ and $\wedge^1 \otimes S^1 \otimes \Omega^0,$ respectively and we view them as the $\Omega^1$-valued ($\wedge^1$-valued) polynomials on $\fg:$
$$
\chi \mapsto - \< \ad(g) \chi + \chi, \hat\Theta\> 
\quad \text{and} \quad 
\chi \mapsto - a^i \<\ad(g) \chi - \ad(g^{-1}) \chi, \xi_i \>.
$$

We can now apply the Mathai-Quillen isomorphism to the string class $s$ (interpreted as above). That is, we can calculate\footnote{Note that in light of remark \ref{R:MQ} we can already identify the image of $s$ under $\phi$ since we know that it is a basic form (indeed we are viewing it as a form on $G \times EG$ which descends to $G \times_G EG$!). However, we shall present the calculation here as it illustrates the beauty of Mathai and Quillen's result and Cartan's formalism. } $\phi(s) = (\exp \gamma)(s) = s + \gamma(s) + \ldots$ We will calculate a few terms here and leave the rest as an exercise. Let us begin with the term $\tfrac16 \< [\hat\Theta, \hat\Theta], \hat\Theta\> =: s_{003}.$ Recalling that the $a^i$ are the connection elements in $W$, we have
\begin{align*}
\gamma (s_{003})& = \tfrac16 (a^i \otimes \iota_i) (\< [\hat\Theta, \hat\Theta], \hat\Theta\>)\\
&= \tfrac12 a^i \< [\iota_i\hat\Theta, \hat\Theta], \hat\Theta\>.
\end{align*}
Now, recall that here $\iota_i$ means contraction with the vector field generating the action of $G$ on itself---in this case the adjoint action. So $\iota_\chi \hat{\Theta}$ is given by
\begin{align*}
\hat\Theta_g \left( \left.\frac{d}{dt} \right|_0 \exp(-t \chi) g \exp( t\chi) \right)
 &= \hat\Theta_g \left( g \chi - \chi g \right)\\
 &= \ad(g) \chi - \chi.
\end{align*}
Therefore,
$$
\gamma(s_{003}) = \tfrac12 a^i \< [\hat\Theta, \hat\Theta], \ad(g) \xi_i - \xi_i \>.
$$
Notice that this precisely cancels out the term $s_{102} := s |_{\wedge^1\otimes S^0\otimes \Omega^2} =  \tfrac12 a^i \< [\hat\Theta, \hat\Theta], \xi_i - \ad(g)\xi_i \>$. We can calculate the $\wedge^2\otimes S^0 \otimes \Omega^1$ part of $\phi(s)$ using
\begin{align*}
s_{201} \phantom{)} &= \tfrac12 a^i \wedge a^j \big\< \hat\Theta , c_{ij}^k (\xi_k + \ad(g) \xi_k) \big\> - a^i \wedge a^j \big\< \hat\Theta , [\ad(g) \xi_i, \xi_j] \big\> ,\\
\gamma(s_{102}) &= - a^i \wedge a^j \big\< \hat\Theta , c_{ij}^k (\xi_k + \ad(g) \xi_k) \big\> + 2 a^i \wedge a^j \big\< \hat\Theta , [\ad(g) \xi_i, \xi_j] \big\> = - 2s_{201},\\
\tfrac12\gamma^2(s_{003}) &= \tfrac12 a^i \wedge a^j \big\< \hat\Theta , c_{ij}^k (\xi_k + \ad(g) \xi_k) \big\> - a^i \wedge a^j \big\< \hat\Theta , [\ad(g) \xi_i, \xi_j] \big\> = s_{201}.
\end{align*}
(Here we have written $\omega_{ijk}$ for the $\wedge^i\otimes S^j \otimes \Omega^k$ part of the element $\omega \in W\otimes \Omega(G)$.) We can calculate all the other terms similarly and we find that everything cancels (as indeed it must!) except for the terms in $\wedge^0 \otimes S^0 \otimes \Omega^3$ and $\wedge^0 \otimes S^1 \otimes \Omega^1$. These are given by $ \tfrac16 \< [\hat\Theta, \hat\Theta], \hat\Theta\>$ and $ \chi \mapsto -\< \chi, \Theta + \hat\Theta \>.$ Therefore, we find
\begin{theorem}\label{T:universal string}
The universal string class for $LG$-bundles is represented by the class of the equivariant differential form
$$
\frac{1}{8\pi^2} \left(\frac16 \< [\hat\Theta, \hat\Theta], \hat\Theta\> - \< \chi, \Theta + \hat\Theta \> \right).
$$
\end{theorem}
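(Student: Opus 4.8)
The strategy is the one already set up in Sections~\ref{SS:string Borel} and~\ref{SS:string Cartan}: first produce an explicit de Rham representative of the string class on the total space $ELG = PG\times EG_n$, which by the discussion of Section~\ref{S:classifying LG} is a class in the Borel model $H(G\times_G EG)$, and then transport it to the Cartan model using the Mathai--Quillen isomorphism of Theorem~\ref{T:MQ}.

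Step one is a direct application of Theorem~\ref{T:string class} to $ELG$ with the connection $A = \Theta - \a\,\ad(p^{-1})\{\ev_{2\pi}^*\hat\Theta - \ad(p(2\pi))a + a\} + \ad(p^{-1})a$, its curvature $F$ (obtained from $F((V,W),(V',W')) = -\tfrac12 A([h(V,W),h(V',W')])$), and the Higgs field $\Phi(p,x) = p^{-1}\partial p$, whose covariant derivative is $\nabla\Phi = \partial\a\,\ad(p^{-1})\{\ev_{2\pi}^*\hat\Theta - \ad(p(2\pi))a + a\}$. Substituting into the formula $-\tfrac{1}{4\pi^2}\int_{S^1}\<\nabla\Phi,F\>\,d\theta$ and performing the loop integral---which reduces everything to the elementary integrals of $(\a^2-\a)\partial\a$ and $\a\partial\a$ over $S^1$---gives, after using $\ad$-invariance of $\<\,,\,\>$ to collect terms, the explicit (if lengthy) representative displayed at the end of Section~\ref{SS:string Borel}, written in terms of $\hat\Theta$, the connection $a$, its curvature $f$, and $g := p(2\pi)$.

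Step two interprets this representative in the Weil model $W\otimes\Omega(G)$. Expanding $a = a^i\xi_i$ and recalling that the $a^i$ generate $\wedge(\fg^*)$ in degree one and the curvature components generate $S(\fg^*)$ in degree two, each term is placed in its bidegree component $\wedge^i(\fg^*)\otimes S^j(\fg^*)\otimes\Omega^k(G)$ with $i+2j+k=3$, as tabulated in Section~\ref{SS:string Cartan}; the two terms involving $f$ are read as an $\Omega^1$-valued and a $\wedge^1$-valued polynomial on $\fg$. Then one applies $\phi = \exp\gamma$ with $\gamma = a^i\otimes\iota_i$, where $\iota_i$ is contraction with the fundamental vector field of the adjoint action, so that $\iota_\chi\hat\Theta = \ad(g)\chi - \chi$ (and, analogously, $\iota_\chi\Theta = \chi - \ad(g^{-1})\chi$). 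By Remark~\ref{R:MQ}, since $s$ is basic, $\phi(s)$ has no component in $\wedge(\fg^*)$, so $\phi$ simply retains the $\wedge^0$ pieces while forcing the remaining terms to cancel; one verifies this degree by degree, as in the identities $\gamma(s_{003}) = -s_{102}$ and $\tfrac12\gamma^2(s_{003}) + \gamma(s_{102}) + s_{201} = 0$ spelled out in the text, together with the analogous cancellation between $\gamma$ applied to the $S^1$-part and the $\wedge^1\otimes S^1\otimes\Omega^0$ term. What remains lies in $\wedge^0\otimes S^0\otimes\Omega^3$ and $\wedge^0\otimes S^1\otimes\Omega^1$, namely $\tfrac16\<[\hat\Theta,\hat\Theta],\hat\Theta\>$ and $\chi\mapsto-\<\chi,\Theta+\hat\Theta\>$ (using $\Theta = \ad(g^{-1})\hat\Theta$ to rewrite $-\<\ad(g)\chi+\chi,\hat\Theta\>$); restoring the overall factor $\tfrac{1}{8\pi^2}$ yields the stated form.

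The one genuinely delicate point is the bookkeeping in step two: keeping correct track of signs, the $\tfrac1{k!}$ factors in $\exp\gamma$, the structure constants $c^k_{ij}$ produced when $\iota_i$ hits $[\hat\Theta,\hat\Theta]$, and the symmetrisation implicit in $S(\fg^*)$, so as to confirm that the many exterior-algebra terms really do cancel in the predicted pairs and triples. Remark~\ref{R:MQ} guarantees a priori that they must---a helpful consistency check---so the only real work is to carry out enough of these cancellations (in particular in the $\wedge^2\otimes S^0\otimes\Omega^1$ and $\wedge^1\otimes S^1\otimes\Omega^0$ parts) to be convincing; by comparison the computation of $\iota_\chi\hat\Theta$ and of the cutoff-function integrals in step one is routine.
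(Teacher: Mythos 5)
Your proposal follows the paper's own argument essentially verbatim: compute a Borel-model representative on $PG\times EG_n$ via Theorem \ref{T:string class} with the explicit connection, curvature and Higgs field of Section \ref{SS:string Borel}, then pass to the Cartan model with the Mathai--Quillen isomorphism, using Remark \ref{R:MQ} to see that only the $\wedge^0\otimes S^0\otimes\Omega^3$ and $\wedge^0\otimes S^1\otimes\Omega^1$ pieces survive, with the explicit cancellations $\gamma(s_{003})=-s_{102}$ and $s_{201}+\gamma(s_{102})+\tfrac12\gamma^2(s_{003})=0$ checked exactly as in the text. The argument is correct and the identification of the surviving terms (including the rewriting of $-\<\ad(g)\chi+\chi,\hat\Theta\>$ as $-\<\chi,\Theta+\hat\Theta\>$ via invariance of the inner product) matches the paper's proof.
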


\begin{remark}
The equivariant form above coincides with the equivariant extension of the transgression form $\frac{1}{48\pi^2} \< \hat\Theta, [\hat\Theta, \hat\Theta] \>$ defined by Alekseev and Meinrenken \cite{Alekseev:2009}. There are also equivariant extensions of the higher analogues of these forms and in the next section we shall show that these represent the universal string classes in all odd dimensions.
\end{remark}

\section{Higher string classes for $LG$-bundles}\label{S:higher LG classes}

We would now like to extend Theorem \ref{T:universal string} to string classes in all odd degrees (in analogy with the results for $\Omega G$-bundles from \cite{Murray-Vozzo1}). As remarked at the end of the last section we wish to show that these coincide with the classes of the equivariant transgression forms as defined in \cite{Alekseev:2009} and \cite{Jeffrey:1995}. Therefore, let us first review the construction of these forms.

\subsection{Equivariant transgression forms}\label{SS:equivariant transgressions}

\subsubsection{Transgression forms}

%Discussion on Bott forms. I decided this is a stupid way to do it.
%
%Let $P \to M$ be a principal $G$-bundle with $r+1$ connection forms $A_i$ $(i = 0, \ldots, r)$ and suppose $p \in I^k(\fg).$ Consider the 1-form $A_t = \sum t_i A_i$ on $P\times \Delta^r$ (where $\Delta^r \subseteq \RR^{r+1}$ is the standard $r$-simplex). If $F_t = dA_t + \frac12 [A_t, A_t]$ is the curvature of $A_t$ then we define
%$$
%\Delta(A_1, \ldots, A_r)p = (-1)^{[\frac{r+1}{2}]} \int_{\Delta^r} p(F_t^k)
%$$
%to be the \emph{degree $r$ Bott form} of $p$ and $A_1, \ldots, A_r$. The form $\Delta(A_1, \ldots, A_r)p$ is closed and in fact $\Delta(A_1, \ldots, A_r)p \in H^{2k-r}(M)$. Note that the degree $0$ Bott form of $p$ is the image of the Chern-Weil homomorphism applied to $P:$ $\Delta(A)p = cw(p) \in H^{2k}(M).$ (See, for example, \cite{Vaisman:1987} for details on Bott forms.) We are interested in the degree 1 Bott form for the $G$-bundle over a point (considered as a form on $G$). $\Delta(0, \Theta)p \in H^{2k-1}(G)$

Let $p\in I^k(\fg)$ and define the 1-form on $G \times [0,1]$ by $t \Theta$. Define the `curvature' $F_t$ of this form by $d(t\Theta) + \frac12 [t\Theta, t\Theta]$ and consider the $(2k-1)$-form on $G$ given by
\begin{align*}
- \int_0^1 p(F_t^k) &= k\int_0^1 p(\Theta, (\tfrac12 (t^2 - t) [\Theta, \Theta])^{k-1}) \, dt\\
	&= \left( -\frac{1}{2} \right)^{k-1} \frac{k! (k - 1)!}{(2k-1)!} \, p(\Theta, [\Theta, \Theta]^{k-1}).
\end{align*}
It is easy to see that this form is closed and we call it the \emph{transgression} of $p$. We write $\tau(p)$ (for both the $(2k-1)$-form and the class in $H^{2k-1}(G)$.)

\subsubsection{Equivariant transgression forms}

To define the equivariant version of the form above, consider the `equivariant curvature' of the 1-form above, $F_G(t\Theta)(\chi),$ given by $d_G(t\Theta)(\chi) + \frac12 [t\Theta, t\Theta]$ where $d_G = d - \iota_\chi$ is the equivariant differential from the Cartan model (Definition \ref{D:Cartan}). The \emph{equivariant transgression} of $p$ is given by \cite{Alekseev:2009, Jeffrey:1995}
$$
\tau_G(p) = -\int_0^1 p((F_G(t\Theta)(\chi) + \chi)^k).
$$
We have 
$$
F_G(t\Theta)= dt \wedge \Theta + \tfrac12 (t^2 - t) [\Theta, \Theta] - t(\chi -\ad (g^{-1})\chi)
$$
and so
\begin{align*}
\tau_G(p) &= -\int_0^1 p((dt \wedge \Theta + \tfrac12 (t^2 - t) [\Theta, \Theta] - t(\chi -\ad (g^{-1})\chi) + \chi)^k)\\
	&= k \int_0^1 p (\Theta, (\tfrac12 (t^2-t)[\Theta, \Theta] + (1-t)\chi +  t \ad(g^{-1})\chi)^{k-1}) \, dt.
\end{align*}
It is easy to see that $\tau_G(p)$ is equivariantly closed and so defines a class in $H^{2k-1}_G(G)$.

\subsection{Higher string classes for $LG$-bundles}\label{SS:higher classes}

Let $P\to M$ be an $LG$-bundle with connection $A$  and Higgs field $\Phi$ and let $F$ be the curvature of $A$ and $\nabla \Phi = d\Phi + [A, \Phi] - \partial A$ the covariant derivative of $\Phi$. Following \cite{Murray-Vozzo1} we define the \emph{string form} of the pair $(A, \Phi)$ associated to $p \in I^k(\fg)$ by
$$
s_p(A, \Phi) = \int_{S^1} cw_p(\tilde A),
$$
for $\tilde A$ the connection on the caloron transform of $P$. As in section \ref{SSS:higher Omega G classes} we have the following formula for the string form:
$$
s_p(A,\Phi) = k \int_{S^1} p(\nabla \Phi, F^{k-1}) \, d\theta.
$$
We also have the results analogous to \cite{Murray-Vozzo1}:

\begin{proposition}\label{P:closed}
The string form is closed and so defines a cohomology class in $H^{2k-1}(M).$
\end{proposition}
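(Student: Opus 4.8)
The plan is to deduce the result from the already-established facts about the caloron transform rather than computing $d s_p(A,\Phi)$ directly from the formula $k\int_{S^1} p(\nabla\Phi,F^{k-1})\,d\theta$. Since $s_p(A,\Phi)=\int_{S^1} cw_p(\tilde A)$ where $cw_p(\tilde A)=p(\tilde F^k)$ is the ordinary Chern--Weil form of the $G$-connection $\tilde A$ on the caloron transform $\widetilde P\to M\times S^1$, the Bianchi identity for $\tilde A$ gives $d\big(cw_p(\tilde A)\big)=0$ on $M\times S^1$. Writing $d_{M\times S^1}=d_M+d\theta\wedge\partial_\theta$ and decomposing $cw_p(\tilde A)$ into its $d\theta$-component and its $M$-component, closedness on $M\times S^1$ says precisely that $d_M$ of the $d\theta$-part equals (up to sign) $\partial_\theta$ of the $M$-part. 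Integrating over the circle $S^1$, the $\partial_\theta$-term drops out by the fundamental theorem of calculus (periodicity in $\theta$), so $d_M\int_{S^1} cw_p(\tilde A)=\int_{S^1} d_M\,cw_p(\tilde A)=0$; that is, $d\,s_p(A,\Phi)=0$. This is the same argument as in \cite{Murray-Vozzo1} for $\Omega G$-bundles, and nothing in it uses the based condition, so it carries over verbatim.

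Concretely, the steps I would carry out are: (i) recall from the caloron correspondence (Proposition~\ref{P:caloron correspondence}) that $\widetilde P\to M\times S^1$ is a genuine smooth principal $G$-bundle and $\tilde A$ a genuine connection, so the classical Chern--Weil theory applies and $cw_p(\tilde A)$ is a closed form on $M\times S^1$; (ii) apply the integration-over-the-fibre formula $d_M\circ\int_{S^1}=\int_{S^1}\circ\, d_{M\times S^1}$ (valid because $M\times S^1\to M$ is a trivial, hence oriented, fibre bundle with closed fibre, so there is no boundary term); (iii) conclude $d\,s_p(A,\Phi)=\int_{S^1} d_{M\times S^1}\,cw_p(\tilde A)=0$. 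As a cross-check one can verify the formula $s_p(A,\Phi)=k\int_{S^1}p(\nabla\Phi,F^{k-1})\,d\theta$ is consistent by substituting $\tilde F=\ad(g^{-1})(F+\nabla\Phi\,d\theta)$ as recorded in Section~\ref{SSS:higher Omega G classes}, expanding $p(\tilde F^k)$, and keeping only the $d\theta$-term; $\ad$-invariance of $p$ removes the $\ad(g^{-1})$.

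The only genuine subtlety — and the step I expect to require the most care — is the first one: making sure that the caloron transform really does land in the category where ordinary Chern--Weil theory is available even though the base $M$ and the loop-group bundle $P$ live in the Fréchet world. This is exactly why Proposition~\ref{P:caloron correspondence} is phrased as a bijection onto honest finite-dimensional $G$-bundles with connection over $M\times S^1$; once that is invoked, $cw_p(\tilde A)$ is an ordinary differential form and every subsequent manipulation is elementary. The independence of $s_p(A,\Phi)$ from the choice of $A$ and $\Phi$ (needed to call the cohomology class $s_p(P)$) follows by the standard transgression argument applied to a path of connections $\tilde A_t$ on $\widetilde P$ joining the caloron transforms of two choices, again integrated over $S^1$; I would state it here and defer to the analogous computation in \cite{Murray-Vozzo1}.
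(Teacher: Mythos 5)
Your proposal is correct and is essentially the paper's own argument: the paper simply notes that the proof is identical to the $\Omega G$ case of \cite{Murray-Vozzo1}, which is exactly the reasoning you give, namely that $s_p(A,\Phi)=\int_{S^1}cw_p(\tilde A)$ is the fibre integral over $S^1$ of the closed Chern--Weil form of the caloron transform, and integration over the closed fibre commutes with the exterior derivative. Your remarks on the finite-dimensionality of $\wt P\to M\times S^1$ and the consistency check via $\tilde F=\ad(g^{-1})(F+\nabla\Phi\,d\theta)$ are sensible but add nothing beyond what the cited argument already covers.
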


This class, which we denote by $s_p(P),$ is called the \emph{string class} of $P$ associated to $p$. 

\begin{proposition}\label{P:indep}
The string class is independent of the choice of connection and Higgs field.
\end{proposition}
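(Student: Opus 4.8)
\subsection*{Proof proposal}

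The plan is to reduce the independence statement to ordinary Chern--Weil theory for the caloron transform. The crucial observation is that, for a \emph{fixed} $LG$-bundle $P\to M$, the $G$-bundle $\wt P=(P\times G\times S^1)/LG\to M\times S^1$ is attached to $P$ alone: neither the connection $A$ nor the Higgs field $\Phi$ enters its construction. What does depend on the choice of $(A,\Phi)$ is only the connection $\tilde A=\ad(g^{-1})A(\theta)+\Theta+\ad(g^{-1})\Phi\,d\theta$, and this is a connection on the \emph{one} fixed $G$-bundle $\wt P$. Thus varying $(A,\Phi)$ on $P$ amounts to nothing more than varying the connection on $\wt P$.

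First I would invoke the standard Chern--Weil theorem for $\wt P$: the de Rham class $[cw_p(\tilde A)]\in H^{2k}(M\times S^1)$ is independent of the connection $\tilde A$, hence independent of $(A,\Phi)$. Next I would check that integration along the $S^1$ factor descends to cohomology: since $S^1$ is closed, fibre integration along $M\times S^1\to M$ satisfies $d\circ\int_{S^1}=\int_{S^1}\circ\,d$ with no boundary term, so it is a chain map and induces a homomorphism $\int_{S^1}\colon H^{2k}(M\times S^1)\to H^{2k-1}(M)$. Composing the two facts, $[s_p(A,\Phi)]=\int_{S^1}[cw_p(\tilde A)]$ depends on $(A,\Phi)$ only through the constant class $[cw_p(\tilde A)]$, which is the assertion. (Along the way this also re-proves Proposition~\ref{P:closed}, since $cw_p(\tilde A)$ is closed and $\int_{S^1}$ preserves closedness.)

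If a self-contained argument avoiding the classifying-bundle language is preferred, one can interpolate directly on $P$. The set of pairs (connection, Higgs field) on $P$ is affine: the difference of two connections is an $\ad$-equivariant $L\fg$-valued one-form on $P$, and the difference of two Higgs fields is a section of the adjoint bundle $P\times_{LG}L\fg$, so $(A_t,\Phi_t)=((1-t)A_0+tA_1,\,(1-t)\Phi_0+t\Phi_1)$ is again such a pair for every $t\in[0,1]$. Feeding $(A_t,\Phi_t)$ into the caloron transform gives a connection on the pullback of $\wt P$ to $M\times S^1\times[0,1]$; its Chern--Weil form, integrated first over $[0,1]$ and then over $S^1$, is an explicit primitive for $s_p(A_1,\Phi_1)-s_p(A_0,\Phi_0)$ --- this is simply the usual Chern--Simons transgression applied to $\wt P$.

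I expect the only step needing genuine care to be the first one: making explicit that the caloron transform, viewed as a $G$-bundle with the connection \emph{forgotten}, is canonically associated to $P$ and unaffected by the auxiliary data, so that we really are comparing two connections on a single bundle rather than on a priori distinct ones. Once that is pinned down, everything else is standard Chern--Weil theory together with the elementary fact that integration over a closed fibre commutes with the exterior derivative.
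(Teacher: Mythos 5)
Your proposal is correct and is essentially the argument the paper has in mind: the paper simply defers to the $\Omega G$ case of \cite{Murray-Vozzo1}, where independence is obtained exactly as you describe, by noting that the caloron transform $\wt P$ is fixed once $P$ is, so varying $(A,\Phi)$ only varies the connection $\tilde A$ on $\wt P$, and then invoking ordinary Chern--Weil invariance together with the fact that fibre integration over $S^1$ is a chain map. Your second, affine-interpolation argument is the same Chern--Simons transgression that underlies the standard Chern--Weil proof, so nothing further is needed.
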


\begin{proposition}\label{P:natural}
The string class is natural. That is, for $f: N\to M$ we have $s_p(f^*P) = f^*s_p(P)$. In particular, $s_p$ defines a characteristic class for $LG$-bundles.
\end{proposition}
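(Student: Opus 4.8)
The plan is to prove naturality first at the level of differential forms, using the explicit formula $s_p(A,\Phi) = k \int_{S^1} p(\nabla\Phi, F^{k-1})\, d\theta$ for the string form, and then pass to cohomology invoking Proposition \ref{P:indep}. So, given $f\colon N\to M$, I would fix a connection $A$ and a Higgs field $\Phi$ on $P\to M$ and pull everything back: the bundle $f^*P\to N$ carries the pullback connection $f^*A$, whose curvature is $f^*F$, and I claim that $f^*\Phi$ (defined on $f^*P = \{(q,n) : \pi(q)=f(n)\}$ by $(f^*\Phi)(q,n) = \Phi(q)$) is a Higgs field for $f^*P$. This is immediate from the fact that the defining identity $\Phi(p\gamma) = \ad(\gamma^{-1})\Phi(p) + \gamma^{-1}\partial\gamma$ is fibrewise over $M$ and the canonical bundle map $f^*P \to P$ is $LG$-equivariant.

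Next I would check that covariant differentiation and integration over the circle each commute with $f^*$. For the former, $\nabla(f^*\Phi) = d(f^*\Phi) + [f^*A, f^*\Phi] - \partial(f^*A) = f^*(d\Phi) + f^*[A,\Phi] - f^*(\partial A) = f^*(\nabla\Phi)$, since exterior differentiation, the pointwise bracket, and the loop-direction derivative $\partial = \partial/\partial\theta$ all commute with pullback along a map of the base. Since $p\in I^k(\fg)$ is multilinear and $f^*$ is an algebra morphism, $p(\nabla(f^*\Phi), (f^*F)^{k-1}) = f^*\, p(\nabla\Phi, F^{k-1})$ for each $\theta\in S^1$. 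Finally, because the $S^1$ variable is a parameter untouched by $f$ (equivalently, integration over $S^1$ is fibre integration for the trivial bundle $N\times S^1 \to N$, and $f\times\mathrm{id}_{S^1}$ is a bundle map over $f$), integration over $S^1$ commutes with $f^*$. Assembling these gives $s_p(f^*A, f^*\Phi) = f^*\, s_p(A,\Phi)$ as forms on $N$; passing to de Rham cohomology and using Proposition \ref{P:indep} (so that the left-hand class is $s_p(f^*P)$ irrespective of the choices made on $f^*P$) yields $s_p(f^*P) = f^*\, s_p(P)$.

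An equivalent and perhaps more conceptual route goes through the caloron transform: one verifies that the caloron transform of $f^*P$ is canonically $(f\times\mathrm{id}_{S^1})^*\wt P$ with connection $(f\times\mathrm{id}_{S^1})^*\tilde A$ — which follows by direct inspection of $\wt P = (P\times G\times S^1)/LG$ and of the formula $\tilde A = \ad(g^{-1})A(\theta) + \Theta + \ad(g^{-1})\Phi\, d\theta$ — and then combines naturality of the Chern--Weil homomorphism, $cw_p((f\times\mathrm{id}_{S^1})^*\tilde A) = (f\times\mathrm{id}_{S^1})^* cw_p(\tilde A)$, with the commutation of fibre integration over $S^1$ with pullback along the bundle map $f\times\mathrm{id}_{S^1}$. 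Either way, the only point that genuinely requires care — rather than being a real obstacle — is the verification that the pulled-back auxiliary data ($A$, $\Phi$, or $\tilde A$) is again admissible data of the required kind; everything else is formal. With naturality and the connection/Higgs-field-independence of Proposition \ref{P:indep} in hand, the last assertion is immediate: $s_p$ assigns to each $LG$-bundle a class on its base depending only on the isomorphism class and natural under pullback, which is exactly a characteristic class for $LG$-bundles; concretely, $s_p(P) = f_P^*\, s_p(ELG)$ where $f_P\colon M\to BLG$ is a classifying map and $s_p(ELG)\in H^{2k-1}(BLG) = H^{2k-1}_G(G)$ is the universal class computed in the preceding sections.
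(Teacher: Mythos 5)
Your argument is correct and is essentially the proof the paper intends: the paper simply states that the proof is identical to the $\Omega G$ case of \cite{Murray-Vozzo1}, which is exactly the pullback computation you carry out (pulled-back connection and Higgs field are admissible, $\nabla$, $p$, and $\int_{S^1}$ commute with $f^*$, then invoke Proposition \ref{P:indep}). Your alternative route via naturality of the caloron transform and the Chern--Weil map is also fine and amounts to the same thing.
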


The proofs of Propositions \ref{P:closed}, \ref{P:indep} and \ref{P:natural} are all identical to the $\Omega G$ case.

As in section \ref{S:universal string class} the string class associated to $p \in I^k(G)$ of the universal $LG$-bundle is an equivariant cohomology class. We have the following result concerning this class.

\begin{proposition}\label{P:s(ELG)}
The string class associated to $p \in I^k(\fg)$ of the universal $LG$-bundle is represented in $H^{2k-1}_G(G)$ by the equivariant transgression of $p.$ That is,
$$
s_p(ELG) = \tau_G(p).
$$
\end{proposition}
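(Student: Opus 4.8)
The plan is to run, for arbitrary $k$, the same three-step argument used in the $k=2$ case of Section \ref{S:universal string class}: first evaluate the string form $s_p(A,\Phi)$ for the standard connection and Higgs field on $ELG=PG\times EG$, obtaining a representative of $s_p(ELG)$ in the Borel model $H(G\times_G EG)$; then carry it through the Mathai--Quillen isomorphism into the Cartan model; and finally recognise the outcome as $\tau_G(p)$. What makes the first step almost immediate is that the connection $A$, the Higgs field $\Phi$, and hence the curvature $F$ and $\nabla\Phi$ for $ELG$ are exactly those written down in Section \ref{SS:string Borel}: they do not depend on $k$, so that computation is reused verbatim.

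\emph{Step 1 (Borel representative).} I would substitute the formulas for $F$ and $\nabla\Phi$ from Section \ref{SS:string Borel} into $s_p(A,\Phi)=k\int_{S^1}p(\nabla\Phi,F^{k-1})\,d\theta$. Setting $g=p(2\pi)$ and $B=\ev_{2\pi}^*\hat\Theta-\ad(g)a+a$, one has $\nabla\Phi=\partial\a\,\ad(p^{-1})B$, while the brace appearing in $F$ is exactly $\tfrac12[B,B]$, so that
\[
F=\ad(p^{-1})\bigl(\tfrac12(\a^2-\a)[B,B]+\a(\ad(g)f-f)+f\bigr),
\]
where $f$ is the curvature of the connection $a$ on $EG$. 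Invariance of $p$ removes the conjugations $\ad(p^{-1})$, and since the only dependence on the loop variable is through the scalar $\a=\a(\theta)$, the change of variables $t=\a(\theta)$ turns $\int_{S^1}(\,\cdot\,)\,\partial\a\,d\theta$ into $\int_0^1(\,\cdot\,)\,dt$. This yields the Borel representative
\[
s_p(ELG)=k\int_0^1 p\bigl(B,\;\bigl(\tfrac12(t^2-t)[B,B]+t(\ad(g)f-f)+f\bigr)^{k-1}\bigr)\,dt,\qquad B=\hat\Theta-\ad(g)a+a,
\]
a form on $G\times EG$ that is closed by Proposition \ref{P:closed} and that, being the pullback of the string form on $BLG\cong G\times_G EG$, is $G$-basic.

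\emph{Steps 2--3 (Cartan model and identification).} I would lift this to the Weil model $W\otimes\Omega(G)$ using the connection--curvature dictionary of Section \ref{SS:Borel} ($a^i\leftrightarrow\theta^i$ in degree one, $f^i\leftrightarrow\mu^i$ in degree two, with $\mu^i$ becoming the Cartan variable $\chi$). Because the form is basic, Theorem \ref{T:MQ} together with Remark \ref{R:MQ} tells us that applying $\phi=\exp(\theta^i\otimes\iota_i)$ simply returns its exterior-degree-zero component; all terms involving $a$ cancel among themselves, as in Section \ref{SS:string Cartan}. By multilinearity of $p$ that $\wedge^0$-component is $p\bigl(\hat\Theta,\bigl(\tfrac12(t^2-t)[\hat\Theta,\hat\Theta]+(1-t)\chi+t\ad(g)\chi\bigr)^{k-1}\bigr)$. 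Conjugating the argument of $p$ by $\ad(g^{-1})$ (harmless by invariance) and using the relation $\hat\Theta=\ad(g)\Theta$ between the two Maurer--Cartan forms, then substituting $t\mapsto1-t$ (which fixes $t^2-t$), converts this into precisely the expression for $\tau_G(p)$ computed in Section \ref{SS:equivariant transgressions}; the overall constant is checked exactly as for Theorem \ref{T:universal string}.

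\emph{Main obstacle.} The one delicate point is Step 2: verifying that the $a$-dependent terms really do all cancel under the Mathai--Quillen isomorphism, which for large $k$ would be a lengthy bookkeeping exercise if done head-on as in Section \ref{SS:string Cartan}. The efficient way round it, and the one I would take, is to sidestep the explicit cancellation entirely --- first observe that $s_p(ELG)$ descends to $G\times_G EG$, hence is basic, and then invoke Remark \ref{R:MQ}, so that computing $\phi$ reduces to extracting the $\wedge^0$-part. One must still be a little careful that the curvature $f$ of $a$ corresponds to the symmetric generators $\mu^i$ (so that it passes to the Cartan variable $\chi$, rather than to a two-form, under the translation) and that, by multilinearity of $p$, no $\wedge^0$-contribution is overlooked --- both points being covered by the discussion in Section \ref{SS:Borel}.
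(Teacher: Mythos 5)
Your proposal is correct and follows essentially the same route as the paper: plug the universal connection and Higgs field of Section \ref{SS:string Borel} into $s_p=k\int_{S^1}p(\nabla\Phi,F^{k-1})\,d\theta$, use basicness together with Remark \ref{R:MQ} to reduce the Mathai--Quillen isomorphism to discarding the $a$-dependent terms (with $f$ passing to $\chi$), and then identify the result with $\tau_G(p)$ after conjugating by $\ad(g^{-1})$ and substituting $t\mapsto 1-t$. The only difference is cosmetic: you record the full Borel representative via $B=\ev_{2\pi}^*\hat\Theta-\ad(g)a+a$ before reducing, whereas the paper drops the $a$-terms immediately.
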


\begin{proof}
Recall that a connection and Higgs field for the universal bundle were given in section \ref{SS:string Borel}. The curvature of this connection was
\begin{multline*}
F =
 \left(\a^2 - \a\right) \ad(p^{-1}) \left\{
 \tfrac{1}{2}[\ev_{2\pi}^*\hat{\Theta}, \ev_{2\pi}^*\hat{\Theta}] 
 - [\ev_{2\pi}^*\hat{\Theta}, \ad(p(2\pi)^{-1})a] \right.\\
 \left.+ [\ev_{2\pi}^*\hat{\Theta}, a] 
+ \tfrac12 \ad(p(2\pi)) [a,a]
 - [\ad(p(2\pi))a, a] 
 + \tfrac{1}{2} [a,a]\right\}  \\
 + \a \ad(p^{-1})(\ad(p(2\pi))f - f) 
 + \ad(p^{-1})f
\end{multline*}
where $a$ is a connection on $EG$ and $f$ is its curvature. The covariant derivative of the Higgs field was given by
$$
\nabla \Phi = \partial\a \ad(p^{-1}) \left\{ \ev_{2\pi}^*\hat{\Theta} - \ad(p(2\pi))a + a\right\}.
$$

We wish to calculate the equivariant differential form representing the string class
$$
s_p = k\int_{S^1} p(\nabla \Phi, F^{k-1})\, d\theta
$$
(we omit the dependence on the $LG$-bundle since this is the universal class). Recall that in order to do this we use the Mathai-Quillen isomorphism which, according to Remark \ref{R:MQ}, requires us to ignore any terms in $s_p$ which involve the connection $a$. Therefore we see that the Mathai-Quillen isomorphism applied to the string class gives the equivariant form
\begin{align*}
s_p &= k \int_{S^1} p(\hat{\Theta}, (\tfrac12 (\a^2 - \a)[\hat\Theta, \hat\Theta] + \a (\ad(g) \chi - \chi) +\chi )^{k-1}) \, \partial \a \, d\theta\\
	&= k \int_0^1 p(\Theta, (\tfrac12 (\a^2 - \a)[\Theta,\Theta]  + \a ( \chi - \ad(g^{-1})\chi) + \ad(g^{-1})\chi )^{k-1})\, d\a\\
	&= k \int_0^1 p(\Theta, (\tfrac12 (\a^2 - \a)[\Theta,\Theta]  + \a  \chi +(1-\a) \ad(g^{-1})\chi )^{k-1})\, d\a.
\end{align*}
Now, making the change of variables $\a \mapsto 1-t$ gives the required result.

\end{proof}

We now have the result we were looking for (in analogy with Theorem \ref{T:"Chern-Weil"})

\begin{theorem}\label{T:LG Chern-Weil}
Let $P \to M$ be an $LG$-bundle and $\wt P \to M\times S^1$ its caloron transform. If
$$
s(P) \colon I^k(\fg) \to H^{2k-1}(M)
$$
is the map which gives for any $p \in I^k(\fg)$ its associated string class, then the following diagram commutes
$$
\xymatrix@R=9ex@C=15ex{ I^k(\fg) \ar^{s(P)}[dr]  \ar^{cw(\wt P)}[r]  \ar^{\tau_G}[d]	& H^{2k}(M \times S^1) \ar^{\int_{S^1}}[d]\\
	H^{2k-1}_G(G) \ar^{f^*}[r]		& H^{2k-1}(M)}
$$
Here $f^*$ is the map induced on cohomology by the classifying map $f$ of $P$ (section \ref{SS:classifying maps}).
\end{theorem}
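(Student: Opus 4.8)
The plan is to assemble the diagram from pieces that are essentially already in place, treating it as a compatibility statement between three maps rather than as an independent computation. The three vertices to be related are the equivariant Chern–Weil data on $G$, the ordinary Chern–Weil data on $M\times S^1$ coming from $\wt P$, and the string class on $M$. First I would observe that the right-hand side of the diagram is just the definition: for $p\in I^k(\fg)$ we have $s_p(P)=\int_{S^1}cw_p(\tilde A)$ by the formula in section~\ref{SS:higher classes}, so the composite $I^k(\fg)\xrightarrow{cw(\wt P)}H^{2k}(M\times S^1)\xrightarrow{\int_{S^1}}H^{2k-1}(M)$ literally equals $s(P)$. Hence the content of the theorem is entirely in the left triangle, namely that $s_p(P)=f^*\tau_G(p)$, and the diagram commutes as soon as that identity is established (together with the already-quoted fact, Proposition~\ref{P:natural}, that $s_p$ is natural).

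Next I would prove $s_p(P)=f^*\tau_G(p)$ by naturality plus the universal case. By Proposition~\ref{P:s(ELG)}, the universal string class $s_p(ELG)\in H^{2k-1}_G(G)$ equals the equivariant transgression $\tau_G(p)$. By the construction in section~\ref{SS:classifying maps}, an arbitrary $LG$-bundle $P\to M$ with Higgs field $\Phi$ admits a classifying map $f\colon P\to PG\times EG_n$, equivariant for the $\Omega G\rtimes G$-action, inducing $\bar f\colon M\to BLG=G\times_G EG$; and the connection $A$ and Higgs field $\Phi=p^{-1}\partial p$ used in section~\ref{SS:string Borel} for $ELG$ pull back under $f$ to the given $A$, $\Phi$ on $P$ (the Higgs field holonomy is designed precisely so that $\hol_\Phi$ intertwines the two Higgs fields — this is the computation at the end of section~\ref{SS:classifying maps}). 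Since the string form $s_p(A,\Phi)$ is built functorially out of $A$ and $\nabla\Phi$, it is natural under bundle maps, so $s_p(A,\Phi)=f^*\big(s_p(A_{\mathrm{univ}},\Phi_{\mathrm{univ}})\big)$; passing to cohomology and using Proposition~\ref{P:s(ELG)} gives $s_p(P)=\bar f^*\,\tau_G(p)$, which is the left triangle. Here one should be a little careful that $f^*$ in the statement means the map $H^{2k-1}_G(G)=H^{2k-1}(BLG)\to H^{2k-1}(M)$ induced by $\bar f$, under the identification $H(BLG)=H_G(G)$ established in section~\ref{S:classifying LG}; making that identification explicit is part of the write-up.

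The main obstacle, and the step I would spend the most care on, is the compatibility of the two sides at the form level rather than just in cohomology — specifically, checking that the connection $\tilde A$ on the caloron transform $\wt P$ and the connection $A$ on $ELG$ from section~\ref{SS:string Borel} are genuinely related so that $\int_{S^1}cw_p(\tilde A)$ and $s_p(A,\Phi)=k\int_{S^1}p(\nabla\Phi,F^{k-1})\,d\theta$ represent the same class. This is handled by the computation, recalled in section~\ref{SSS:higher Omega G classes}, that $\tilde F=\ad(g^{-1})(F+\nabla\Phi\,d\theta)$, so that $p(\tilde F^k)$ expands, upon integrating the $d\theta$ out, into $k\,p(\nabla\Phi,F^{k-1})\,d\theta$ plus terms with no $d\theta$ that die under $\int_{S^1}$; the $\ad(g^{-1})$ disappears by $\ad$-invariance of $p$. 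With that identity in hand, everything is consistent and the diagram commutes. I would also remark that independence of the choices of $A$, $\Phi$, and of the classifying map $f$ (up to homotopy) is exactly Propositions~\ref{P:closed} and~\ref{P:indep} together with the usual uniqueness of classifying maps, so no further argument is needed there.
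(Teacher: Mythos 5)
Your proposal is correct and follows essentially the same route as the paper: the upper triangle holds by the definition of the string class via the caloron transform, and the lower triangle follows from naturality (Proposition \ref{P:natural}) together with the universal computation $s_p(ELG)=\tau_G(p)$ of Proposition \ref{P:s(ELG)}. The only caveat is that the pullback under $f$ of the universal \emph{connection} need not equal the given $A$ (only the Higgs field is matched by construction), but this is immaterial since Proposition \ref{P:indep} makes the class independent of those choices, which is exactly how the paper's appeal to naturality resolves it.
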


\begin{proof}
The upper triangle in the diagram commutes by the definition of $s(P).$ Proposition \ref{P:natural} tells us that the string class is natural, so for the lower triangle it is enough to calculate the universal string class and then $s_p(P) = f^*s_p(ELG)$. But we know from Proposition \ref{P:s(ELG)} that $s_p(ELG)$ is equal to the equivariant transgression $\tau_G(p).$ Therefore, the diagram commutes.

\end{proof}

%\bibliographystyle{abbrv}
%\bibliography{../../BibTeX_References}

\end{document}